\def\hB{\hspace*{\fill}$\qed$}
\title{The coarse Pimsner-Voiculescu sequence}
\author{
Ulrich Bunke\thanks{Fakult{\"a}t f{\"u}r Mathematik,
Universit{\"a}t Regensburg,
93040 Regensburg,
ulrich.bunke@mathematik.uni-regensburg.de} 
}
\numberwithin{equation}{section}
\newtheorem{theorem}{Theorem}[section] 
\newtheorem{prop}[theorem]{Proposition}
\newtheorem{lem}[theorem]{Lemma}
\newtheorem{ddd}[theorem]{Definition}
\newtheorem{kor}[theorem]{Corollary}
\theoremstyle{remark}
\theoremstyle{definition}
\newtheorem{ex}[theorem]{Example}
\newtheorem{rem}[theorem]{Remark}
\renewcommand{\loc}{\mathrm{loc}}
\newcommand{\free}{\mathrm{free}}
\newcommand{\PV}{\mathbf{PV}}
\newcommand{\bX}{\mathbf{X}}
\newcommand{\UBC}{\mathbf{UBC}}
\newcommand{\Yo}{\mathrm{Yo}}
\newcommand{\Res}{\mathrm{Res}}
\newcommand{\Orb}{\mathbf{Orb}}
\newcommand{\Hilb}{\mathbf{Hilb}}
\newcommand{\BC}{\mathbf{BC}}
\newcommand{\Fin}{\mathbf{Fin}}
\newcommand{\bM}{\mathbf{M}}
\newcommand{\bA}{{\mathbf{A}}}
\newcommand{\cO}{{\mathcal{O}}}
\newcommand{\cE}{{\mathcal{E}}}
\newcommand{\bP}{\mathbf{P}}
\newcommand{\KK}{\mathrm{KK}}
\newcommand{\cone}{\mathrm{cone}}
\newcommand{\nCcat}{C^{*}\mathbf{Cat}^{\mathrm{nu}}}
\newcommand{\Kcat}{\mathrm{K}^{C^{*}\mathrm{Cat}}}
 \newcommand{\coass}{\mathrm{coass}}
\newcommand{\disc}{\mathrm{disc}}
\begin{document}
\maketitle  
\begin{abstract}
We derive the Pimsner-Voiculescu sequence calculating the $K$-theory of a $C^{*}$-algebra with $\Z$-action using constructions with equivariant coarse $K$-homology theory. 
We then investigate to which extend this  idea extends to more general equivariant coarse homology theories.   
\end{abstract}
\tableofcontents

 \setcounter{tocdepth}{5}

\section{Introduction}

If $A$ is a $C^{*}$-algebra with an action of the group $\Z$, then the Pimsner-Voiculescu sequence \cite{pv}, \cite[10.2.1]{blackadar} is  the long exact sequence of $K$-theory groups \begin{equation}\label{asdfasdfadfdffs}
K_{*+1}(A\rtimes \Z)\to K_{*}(\Res^{\Z}(A))\stackrel{1-\sigma_{*}}{\to}
 K_{*}(\Res^{\Z}(A))\to K_{*}(A\rtimes \Z)\ ,
\end{equation}
 where  $\Res^{\Z}$ denotes the operation of forgetting the $\Z$-action, and 
$\sigma_{*}$ is induced by the $\Z$-action on $A$ via functoriality. 
It is the long exact sequence of homotopy groups 
associated to a fibre sequence  of $K$-theory spectra \begin{equation}\label{adfasdfadsfdasf}
\xymatrix{K(\Res^{\Z}(A))\ar[r]\ar[d]&K(A\rtimes \Z)\ar[d]\\0\ar[r]& \Sigma K(\Res^{\Z}(A))}\ .
\end{equation}
  
The classical arguments derive  such a fibre sequence of spectra from a suitably designed short exact sequence of $C^{*}$-algebras.  Thereby different proofs work with different sequences. 
They all give the long exact sequence \eqref{asdfasdfadfdffs} of homotopy groups,   but the question whether the resulting fibre sequences of spectra are equivalent has not been discussed in the literature so far.

The Baum-Connes conjecture with coefficients for the amenable group $\Z$  is known to hold and asserts that
 the assembly map   \begin{equation} \label{eqwfwefwdwe} \mu^{Kasp}_{\Fin,A}:\KK^{\Z}(C_{0}(\R),A)\to K(A\rtimes \Z)\end{equation}
introduced by Kasparov \cite{kasparovinvent} (see  \cite[13.9]{bel-paschke} for the spectrum level version)
 is an equivalence. 
 Accepting to use the Baum-Connes conjecture with coefficients for $\Z$
 a fibre sequence of the form \eqref{adfasdfadsfdasf} can be derived    in the following two ways.  
 
Observe that   $\KK^{\Z}(C_{0}(-),A)$ is a  locally finite $\Z$-equivariant homology theory. Using the  
 $\Z$-CW-structure of $\R$ with precisely  two    $\Z$-cells in   dimension $0$  and $1$
 and the identification $\KK^{\Z}(C_{0}(\Z),A)\simeq K(\Res^{\Z}(A))$ (use \cite[Cor. 1.23]{KKG})
 we get a fibre sequence   of the form \eqref{adfasdfadsfdasf}.
 
Alternatively,  
to $A$ one can associate a Davis-L\"uck type functor
$K^{\Z}_{A}:\Z\Orb\to \Sp$ \cite{kranz} (see also  \cite[Def. 16.15]{bel-paschke}) whose values on the $\Z$-orbits $\Z$ and $*$ are given by \begin{equation}\label{ewfewfawefwead}
K(\Res^{\Z}(A))\simeq K^{\Z}_{A}(\Z)\ , \quad K(A\rtimes \Z)\simeq K_{A}^{\Z}(*)\ .
\end{equation}   
If we equip $K(\Res^{\Z}(A))$ with the $\Z$-action induced by functoriality from the $\Z$-action on $A$, and $ K^{\Z}_{A}(\Z)$ with the $\Z$-action induced by functoriality from the $\Z$-action on the orbit $\Z$, then  the first equivalence   in \eqref{ewfewfawefwead}  is $\Z$-equivariant. The Davis-L\"uck assembly map    (see \eqref{wefqwedqwdeewdeqwd}  below)   turns out to be equivalent      to the map
\begin{equation}\label{qfqwefewdqwed}
\colim_{B\Z} K(A) \to K(A\rtimes \Z)\ .
\end{equation}  Since  Kasparov's assembly map \eqref{eqwfwefwdwe}  and the Davis-L\"uck assembly map are isomorphic on the level of homotopy groups \cite{kranz}, if one of them is an equivalence, then so is the other. Therefore  the Baum-Connes conjecture with coefficients for $\Z$, stating that Kasparov's assembly map is an equivalence,  also implies  that  \eqref{qfqwefewdqwed} is an equivalence.  The usual cofibre sequence calculating the  coinvariants of a $\Z$-object in a stable $\infty$-category   specializes to 
 $$K(\Res^{\Z}(A))\stackrel{1-\sigma}{\to} K(\Res^{\Z}(A)) \to K(A\rtimes \Z)\ ,$$ (we write the square  \eqref{adfasdfadsfdasf} in this form in order to be able to highlight the map $1-\sigma$),
 where $\sigma$ denotes the action of the generator of $\Z$ on $K(\Res^{\Z}(A))$. On the level of homotopy groups this cofibre sequence 
  induces the PV-sequence \eqref{asdfasdfadfdffs} 
 including the calculation of the map in the middle.

One outcome of  this note is  an alternative construction of   a  square of the form \eqref{adfasdfadsfdasf} in terms of the $\Z$-equivariant coarse the $K$-homology theory. In particular, in our construction the maps  involved in this square  acquire  a clear geometric interpretation. 

The symmetric monoidal category $(\Z\BC,\otimes)$ of $\Z$-bornological coarse spaces and the notion of a $\Z$-equivariant coarse homology theory have been 
introduced in \cite{equicoarse}.  Examples of $\Z$-bornological coarse spaces are $\Z_{min,min}$ and $\Z_{can,min}$ given by the group $\Z$ with the minimal bornology and  the  minimal or canonical coarse structures.
The additional structure of transfers was introduced in \cite{coarsetrans}.
 For every $\Z$-equivariant coarse homology theory $E^{\Z}:\Z\BC\to \bM$ with transfers and $\Z$-bornological coarse space $X$  in Section \ref{wkgopgsgfdg}
we will construct a commutative square  
 \begin{equation}\label{qwdqwewqwdqweddwdwdwd1eee} \xymatrix{ E^{\Z}( X\otimes \Z_{min,min})\ar[r]^-{\iota}\ar[d]& E^{  \Z}(X\otimes \Z_{can,min})\ar[d]^{\tr}\\0\ar[r]& E^{\Z}(X\otimes \Z_{can,min}\otimes \Z_{min,min})}
\end{equation} 
which we call the coarse PV-square associated to $E^{\Z}$ and $X$. 
The morphism $\iota$ in  \eqref{qwdqwewqwdqweddwdwdwd1eee} is induced by the   morphism
of bornological coarse spaces $\Z_{min,min}\to \Z_{can,min}$ given by the identity of the underlying sets, and the morphism
$\tr$ is the transfer morphism  induced by the coarse covering 
$\Z_{min,min}\to *$. The filler of the square is given by a simple
argument using the properties of   coarse homology theories.  

We then study conditions on $E^{\Z}$ and $X$ which imply that the square 
\eqref{qwdqwewqwdqweddwdwdwd1eee} cartesian. 


In order to formulate one such condition, for any $\Z$-equivariant coarse homology theory $F^{\Z}:\Z\BC\to \bM$ 
 we form the functor $$HF^{\Z}  :\Z\Orb\to \bM\ , \quad S\mapsto F^{\Z}(S_{min,max}\otimes \Z_{can,min})$$
 and consider   
 the Davis-L\"uck   assembly map \begin{equation}\label{wefqwedqwdeewdeqwd}
 \mu^{DL}_{HF^{\Z} }:\colim_{\Z_{\Fin}\Orb}HF^{\Z}\to HF^{\Z}(*)
\end{equation} 
  for  the family $\Fin$ of finite subgroups. 

We  
 form the $\Z$-equivariant coarse homology theory
$E_{X,c}^{\Z}:\Z\BC\to \bM$ given by the continuous approximation  of thq    $E^{\Z}(X\otimes -):\Z\BC\to \bM$ of $E^{\Z}$ by $X$.   \begin{theorem}[{Theorem \ref{weijtgowegferwerwgwergre}}]\label{gwjreogrgrfwerefwerf}
 Assume:
\begin{enumerate}
\item\label{sdvsdfvfewcd} $X$  has the minimal bornology and is discrete.
\item\label{wetkogierfrewfwef} $E^{\Z}$  is strong and strongly additive.
\end{enumerate}
 Then the PV-square \eqref{qwdqwewqwdqwedefe1eee}   is cartesian if and only if $\mu_{HE^{\Z}_{X,c}}^{DL}$ is an equivalence.
\end{theorem}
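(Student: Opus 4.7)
The plan is to reinterpret both sides of the claimed equivalence as statements about a single cofibre sequence in $\bM$, exploiting that $\Z$ is torsion-free so that $\Fin$ contains only the trivial subgroup. Because $\Z_{\Fin}\Orb$ then has essentially one object $\Z$ with automorphism group $\Z$, the Davis-Lück assembly map \eqref{wefqwedqwdeewdeqwd} reduces to the canonical comparison
$$\mu^{DL}_{HE^{\Z}_{X,c}}\colon (HE^{\Z}_{X,c}(\Z))_{h\Z}\to HE^{\Z}_{X,c}(*).$$
The first task is to identify the entries of the PV-square with values of $HE^{\Z}_{X,c}$. Unwinding definitions, $HE^{\Z}_{X,c}(*)\simeq E^{\Z}(X\otimes\Z_{can,min})$, so the upper-right corner is already $HE^{\Z}_{X,c}(*)$, and the strongness of $E^{\Z}$ is exactly what is needed to commute the continuous approximation past the $\Z_{can,min}$ tensor factor and treat the lower-right corner in the same spirit.

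The geometric heart of the argument is the left column of the PV-square. The discreteness and minimal-bornology assumption on $X$ together with strong additivity decomposes $X\otimes\Z_{min,min}$ as a $\Z$-equivariant free coproduct of copies of $X$, and applying $E^{\Z}$ yields an object that I would identify with $HE^{\Z}_{X,c}(\Z)$ regarded as a plain object of $\bM$. Analogously $X\otimes\Z_{can,min}\otimes\Z_{min,min}$ is a $\Z$-equivariant free coproduct whose value under $E^{\Z}$ computes $\Sigma HE^{\Z}_{X,c}(\Z)$, the shift arising from the coarse transfer along $\Z_{min,min}\to *$. Under these identifications, the upper row $\iota$ and the right column $\tr$ of the PV-square assemble into a candidate cofibre sequence $HE^{\Z}_{X,c}(\Z)\to HE^{\Z}_{X,c}(*)\to\Sigma HE^{\Z}_{X,c}(\Z)$, and the PV-square is cartesian precisely when this candidate is actually a cofibre sequence.

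This candidate is then to be compared, via $\mu^{DL}_{HE^{\Z}_{X,c}}$, with the rotated coinvariants cofibre sequence $HE^{\Z}_{X,c}(\Z)\to (HE^{\Z}_{X,c}(\Z))_{h\Z}\to\Sigma HE^{\Z}_{X,c}(\Z)$ that exists tautologically in any stable $\infty$-category. The key compatibility to check is that $\iota$ factors as the projection to coinvariants followed by $\mu^{DL}_{HE^{\Z}_{X,c}}$, and that $\tr$ is identified with the coinvariants boundary. Given this, stability of $\bM$ and the fact that the two sequences agree on outer terms implies that the candidate PV sequence is a cofibre sequence if and only if $\mu^{DL}_{HE^{\Z}_{X,c}}$ is an equivalence, yielding the theorem. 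The main obstacle will be carrying out these identifications at the level of morphisms rather than just objects, since strong additivity readily supplies the object-level identifications and strongness handles the continuous-approximation compatibility, but verifying that the canonical filler of the PV-square supplied by the construction in Section \ref{wkgopgsgfdg} agrees with the filler of the coinvariants cofibre sequence is where I expect the most delicate bookkeeping.
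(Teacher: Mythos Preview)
Your outline is in the right spirit but both diverges from the paper's route and leaves the decisive step unaddressed. The paper does \emph{not} compare the PV-square directly with the coinvariants cofibre sequence. Instead it factors the transfer through a \emph{coassembly map} $\coass_{X,Z}:E^{\Z}_X(Z)\to\lim_{B\Z'}E^{\Z}_X(Z\otimes\Z_{min,min})$ (so $\Z'$-invariants, not coinvariants), shows via a diagram chase that the PV-square is cartesian iff $\coass_{X,\Z_{can,min}}$ is an equivalence, then relates $\coass_{X,\Z_{can,min}}$ to the forget-control map $\gamma_{E^{\Z}_X}$ using cone machinery $\cO^{\infty}$ and the Rips complex $P_U(\Z_{can,min})\simeq\R$, and finally invokes \cite[Cor.~8.25]{desc} as a black box identifying $\gamma_{E^{\Z}_X}$ with $\mu^{DL}_{HE^{\Z}_{X,c}}$. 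The strongness hypothesis is used to run the coarse assembly map argument for $\Res^{\Z}(\Z_{can,min})$, and strong additivity is used to show $\coass_{X,\Z_{min,min}}$ is an equivalence; neither is used quite the way you describe.

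There are two concrete gaps in your plan. First, your object identifications are off: $HE^{\Z}_{X,c}(\Z)=E^{\Z}_{X,c}(\Z_{min,max}\otimes\Z_{can,min})$, not $E^{\Z}_X(\Z_{min,min})$, and the suspension in the lower-right corner does not come from the transfer but from the Mayer-Vietoris boundary for the decomposition $\Res^{\Z}(\Z_{can,min})=\nat\cup(-\nat)$ after shearing. Second, and more seriously, the step you flag as ``delicate bookkeeping'' is precisely the entire content of the theorem: to conclude that the PV-square is cartesian iff $\mu^{DL}$ is an equivalence, you must show that the \emph{filler} of the PV-square (constructed in Lemma~\ref{wigowegreewff} via a specific Mayer-Vietoris boundary) agrees with the filler of the coinvariants sequence transported along $\mu^{DL}$. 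Without that compatibility of null-homotopies, knowing that the outer terms and maps match does not yield the biconditional. The paper circumvents this by routing everything through the forget-control map, for which the identification with $\mu^{DL}$ is already established in \cite{desc}; your direct approach would need an independent argument here, and you have not indicated one.
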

We refer to Section \ref{wkgopgsgfdg} for an explanation  and references concerning  the additional properties of $\Z$-equivariant coarse homology theories appearing in the above statement.
The  Condition \ref{gwjreogrgrfwerefwerf}.\ref{wetkogierfrewfwef} on $E^{\Z}$ is satisfied for  many examples of equivariant coarse homology theories, e.g. the coarse topological $K$-theory \eqref{qwedqwedqwedqwd} with coefficients in a $C^{*}$-category with $\Z$-action, or the coarse algebraic $K$-homology   associated to  an additive  category or a left-exact $\infty$-category with $\Z$-action. The first is the main example for the present paper, and we refer to   Example \ref{wrthokwegfwerwrf} for detailed references for the second case and to \cite{unik} for the third. 
The condition on  $\mu_{HE^{\Z}_{X,c}}^{DL}$
 is complicated  and not always satisfied, see Example \ref{wrthokwegfwerwrf}. 

At a first glance the Condition \ref{gwjreogrgrfwerefwerf}.\ref{sdvsdfvfewcd} on $X$   is very restrictive, but using that the corners of 
 the square \eqref{qwdqwewqwdqweddwdwdwd1eee} are $\Z$-equivariant coarse homology theories in the variable $X$
 one can extend the range of $\Z$-bornological spaces $X$ for which this square is known to be cartesian considerably.
Let $\Yo^{s}:\Z\BC\to \Z\Sp\cX$ be the universal $\Z$-equivariant coarse homology theory.
Then the property that \eqref{qwdqwewqwdqweddwdwdwd1eee} is cartesian only depends on the image 
$\Yo^{s}_{\loc}(X):=\ell(\Yo^{s}(X))$ of $\Yo^{s}(X)$ under the localization $\ell:\Z\Sp\cX\to \Z\Sp\cX_{\loc}$ of $\Z\Sp\cX$ at the three 
$\Z$-equivariant coarse homology theories $E^{\Z}( -\otimes \Z_{min,min})$, $E^{\Z}(- \otimes \Z_{can,min})$ and
$E^{\Z}( -\otimes \Z_{can,min}\otimes \Z_{min,min})$ appearing at the corners of \eqref{qwdqwewqwdqweddwdwdwd1eee}.
In Definition \ref{eiojgwergrefwerf} we introduce $\Z\Sp\cX_{\loc}\langle DL\rangle$ as the localizing subcategory of $\Z\Sp\cX_{\loc}$
generated by  $\Yo^{s}_{\loc}(X)$ for all $X$ in $\Z\BC$ which are discrete, have the minimal bornology, and are such that $\mu^{DL}_{HE^{\Z}_{X,c}}$ is an equivalence.
Then for $X$ in $\Z\BC$ we have the following consequence of Theorem \ref{gwjreogrgrfwerefwerf}.
 \begin{kor}[{Corollary \ref{wrthiojwergwergwerrwfg}}] \label{erthokeorpthertgergregretg}Assume that $E^{\Z}$  is strong and strongly additive.
 If $\Yo^{s}_{\loc}(X)\in \Z\Sp\cX_{\loc}\langle DL\rangle$, then the PV-square \eqref{qwdqwewqwdqwedefe1eee}   is cartesian.
 \end{kor}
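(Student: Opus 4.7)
The plan is to upgrade the PV-square to a natural construction on $\Z\Sp\cX_{\loc}$ and then reduce the corollary to Theorem~\ref{gwjreogrgrfwerefwerf} via a localizing-subcategory argument. First I would observe that each of the three non-trivial corners of the PV-square~\eqref{qwdqwewqwdqweddwdwdwd1eee}, viewed as a functor of $X\in\Z\BC$, is one of the three $\Z$-equivariant coarse homology theories $E^{\Z}(-\otimes\Z_{min,min})$, $E^{\Z}(-\otimes\Z_{can,min})$ and $E^{\Z}(-\otimes\Z_{can,min}\otimes\Z_{min,min})$. Since the morphisms $\iota$, $\tr$ and the filler constructed in Section~\ref{wkgopgsgfdg} are all natural in $X$, the entire PV-square assembles into a single functor $\Phi\colon\Z\BC\to\mathrm{Fun}(\Delta^{1}\times\Delta^{1},\bM)$. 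By the universal property of $\Yo^{s}$, and since $\ell\colon\Z\Sp\cX\to\Z\Sp\cX_{\loc}$ is by construction the localization at precisely these three theories, $\Phi$ descends along $\Yo^{s}_{\loc}$ to a functor $\bar{\Phi}\colon\Z\Sp\cX_{\loc}\to\mathrm{Fun}(\Delta^{1}\times\Delta^{1},\bM)$.

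I would then define $\mathcal{C}\subseteq\Z\Sp\cX_{\loc}$ to be the full subcategory spanned by the objects $C$ for which $\bar{\Phi}(C)$ is a cartesian square. Since $\bM$ is a cocomplete stable $\infty$-category, cartesian and cocartesian squares coincide there, and this condition is preserved by colimits and fibres in $\mathrm{Fun}(\Delta^{1}\times\Delta^{1},\bM)$; because $\bar{\Phi}$ is induced from a colimit-preserving functor by localization, $\mathcal{C}$ is a localizing subcategory of $\Z\Sp\cX_{\loc}$. By Theorem~\ref{gwjreogrgrfwerefwerf}, applied to the given strong and strongly additive $E^{\Z}$, every discrete $X$ with minimal bornology and with $\mu^{DL}_{HE^{\Z}_{X,c}}$ an equivalence satisfies $\Yo^{s}_{\loc}(X)\in\mathcal{C}$. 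By Definition~\ref{eiojgwergrefwerf} the objects $\Yo^{s}_{\loc}(X)$ of this form are precisely the generators of $\Z\Sp\cX_{\loc}\langle DL\rangle$, so $\Z\Sp\cX_{\loc}\langle DL\rangle\subseteq\mathcal{C}$, which yields the conclusion under the hypothesis of the corollary.

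The main obstacle will be the descent in the first step: one must ensure not only that the four corners factor separately through the localization, but that the entire commutative square together with its $2$-cell filler is natural in $X$ and passes through $\Yo^{s}_{\loc}$. This reduces to the observation that $\mathrm{Fun}(\Delta^{1}\times\Delta^{1},\bM)$ is computed as a limit diagram of copies of $\bM$, so a pointwise equivalence of such diagrams is detected on the vertices; since $\Yo^{s}_{\loc}$ inverts exactly the morphisms sent to equivalences by all three vertex coarse homology theories, the required factorization follows and the two remaining steps are then formal.
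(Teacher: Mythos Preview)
Your argument is correct and follows exactly the paper's approach: the paper introduces the full subcategory $\PV_{E^{\Z}}\subseteq\Z\Sp\cX_{\loc}$ of objects for which the PV-square is cartesian (your $\mathcal{C}$), observes that it is localizing, and then deduces the corollary as an immediate consequence of Theorem~\ref{weijtgowegferwerwgwergre} applied to the generators of $\Z\Sp\cX_{\loc}\langle DL\rangle$. Your discussion of the descent of the full square (including the filler) through $\Yo^{s}_{\loc}$ makes explicit what the paper only asserts in passing, but the underlying strategy is the same.
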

 
There are many $\Z$-bornological coarse spaces $X$ 
which are not necessarily discrete or have the minimal bornology 
but still  satisfy $\Yo^{s}_{\loc}(X) \in \Z\Sp\cX_{\loc}\langle DL\rangle$.
It is even not clear that for such spaces 
the Davis-L\"uck assembly map $\mu^{DL}_{HE^{\Z}_{X,c}}$ is an equivalence, but nevertheless  the PV-square \eqref{qwdqwewqwdqwedefe1eee}   is cartesian. 


In order to connect the square \eqref{qwdqwewqwdqweddwdwdwd1eee}  with the classical PV-sequence \eqref{adfasdfadsfdasf} for a $\Z$-$C^{*}$-algebra $A$
we take  $E^{\Z}=K\cX_{\Hilb_{c}(A)}^{\Z}$, the coarse algebraic $K$-homology \eqref{qwedqwedqwedqwd} with coefficients in the $C^{*}$-category $\Hilb_{c}(A)$ of Hilbert $A$-modules and compact operators \cite{coarsek}, and  $X=*$. 
Then $E^{\Z}$ and $X$   satisfy the assumptions of Theorem \ref{gwjreogrgrfwerefwerf}.
 We further employ the fact that  group $\Z$ satisfies the Baum-Connes conjecture with coefficients in order to verify  that $\mu^{DL}_{HK\cX_{\bC}^{\Z}} $ is an equivalence.  In view of   the obvious equivalence
 $\mu^{DL}_{HK\cX_{\bC}^{\Z}} \simeq \mu^{DL}_{HK\cX_{\bC,*,c}^{\Z}} $, 
    by Theorem \ref{gwjreogrgrfwerefwerf}
the  coarse PV-square  
 \begin{equation}\label{qwdqwewqrwdqweddwdwdwd1eee} \xymatrix{ K\cX^{\Z}_{\Hilb_{c}(A)}(  \Z_{min,min})\ar[r]^-{\iota}\ar[d]&  K\cX_{\Hilb_{c}(A)}^{  \Z}(  \Z_{can,min})\ar[d]^{\tr}\\0\ar[r]&  K\cX_{\Hilb_{c}(A)}^{\Z}(  \Z_{can,min}\otimes \Z_{min,min})}
\end{equation} 
is cartesian. In  Proposition 
\ref{eorkjgwegreegrwegr9} we explain that this square  is 
 equivalent to a square of the form \eqref{adfasdfadsfdasf}. 
 We further determine the boundary map 
 explicitly.
We thus get a new  construction of a Pimsner-Voiculescu sequence. 

 

%
%

Note that $\Hilb_{c}(A)$ for a $\Z$-$C^{*}$-algebra $A$ is just a particular  example of a $C^{*}$-category with a strict $\Z$-action. More generally, 
 if   $\bC$ is any $C^{*}$-category with a strict $\Z$-action which  admits small orthogonal AV-sums, then   we have the $\Z$-equivariant coarse homology theory with transfers \begin{equation}\label{qwedqwedqwedqwd}K\cX_{\bC}^{\Z}:\Z\BC\to \Sp\end{equation}
 constructed in \cite{coarsek}.   For every 
$X$ in $\Z\BC$,  by specializing \eqref{qwdqwewqwdqweddwdwdwd1eee}, we  obtain the 
coarse PV-square 
 \begin{equation}\label{qwdqwerrwqwdqweddwdwdwd1eee} \xymatrix{ K\cX_{\bC}^{\Z}( X\otimes \Z_{min,min})\ar[r]^-{\iota}\ar[d]& K\cX_{\bC}^{\Z} (X\otimes \Z_{can,min})\ar[d]^{\tr}\\0\ar[r]& K\cX_{\bC}^{\Z}(X\otimes \Z_{can,min}\otimes \Z_{min,min})}\ .
\end{equation} 
If $X $ is discrete and has the minimal bornology, i.e., $X\cong Y_{min,min}$ for some $\Z$-set $Y$, then 
  by Proposition \ref{wiothgerththerh}  we have an equivalence of $\Z$-equivariant coarse homology theories 
$$K\cX_{\bC,X,c}^{\Z }\simeq K\cX_{\bC_{Y}}^{\Z} \ ,$$ 
where $ \bC_{Y}$ is a suitably defined $C^{*}$-category with $\Z$-action depending on $Y$. We can therefore reduce the problem of showing that  \eqref{qwdqwerrwqwdqweddwdwdwd1eee} is cartesian to the case of $X=*$ at the cost of modifying the coefficient $C^{*}$-category. Using the Baum-Connes conjecture with coefficients for $\Z$ we can then check that
 $\mu^{DL}_{HK\cX^{\Z}_{\bC_{Y}}}$ and hence
 $\mu^{DL}_{HK\cX^{\Z}_{\bC,X,c}}$ are equivalences so that $\Yo^{s}_{\loc}(X)\in \Z\Sp\cX_{\loc}\langle DL\rangle$ by Theorem \ref{gwjreogrgrfwerefwerf}.  
 
 Let $\Z\Sp\cX_{\loc}\langle \disc\rangle$ be the localizing subcategory of $\Z\Sp\cX_{\loc}$ generated by all $X$ in $\Z\BC$ which are discrete and have the minimal bornology.
 \begin{theorem}[{Theorem \ref{weitgowergerfrwferfw}}]
 Assume that $E^{\Z}=K\cX_{\bC}^{\Z}$. Then 
  $$\Z\Sp\cX_{\loc}\langle \disc\rangle\subseteq \Z\Sp\cX_{\loc}\langle DL\rangle\ .$$
  \end{theorem}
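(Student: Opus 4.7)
The plan is to verify the containment by checking that every generator of $\Z\Sp\cX_{\loc}\langle \disc\rangle$ lies in $\Z\Sp\cX_{\loc}\langle DL\rangle$. Since localizing subcategories are stable under the colimits used to saturate generators, it suffices to show that for every $\Z$-set $Y$ the object $\Yo^{s}_{\loc}(Y_{min,min})$ belongs to $\Z\Sp\cX_{\loc}\langle DL\rangle$. By the definition of $\Z\Sp\cX_{\loc}\langle DL\rangle$, this amounts to verifying that $\mu^{DL}_{HK\cX^{\Z}_{\bC,X,c}}$ is an equivalence for $X=Y_{min,min}$, since $Y_{min,min}$ is by assumption discrete with minimal bornology.

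The key reduction is the equivalence of $\Z$-equivariant coarse homology theories $K\cX^{\Z}_{\bC,Y_{min,min},c}\simeq K\cX^{\Z}_{\bC_{Y}}$ provided by Proposition \ref{wiothgerththerh}, where $\bC_{Y}$ is the $C^{*}$-category with $\Z$-action constructed from $\bC$ and the $\Z$-set $Y$. Applying the construction $H(-)$ to both sides yields a natural equivalence of functors $\Z\Orb\to\Sp$, and hence an identification of Davis-L\"uck assembly maps $\mu^{DL}_{HK\cX^{\Z}_{\bC,Y_{min,min},c}}\simeq \mu^{DL}_{HK\cX^{\Z}_{\bC_{Y}}}$. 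The problem is therefore reduced to establishing that $\mu^{DL}_{HK\cX^{\Z}_{\bC_{Y}}}$ is an equivalence for an arbitrary $C^{*}$-category $\bC_{Y}$ with strict $\Z$-action (admitting the appropriate sums).

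For this last point, I would invoke the Baum-Connes conjecture with coefficients for $\Z$ exactly as indicated in the introductory discussion surrounding \eqref{qwdqwerrwqwdqweddwdwdwd1eee}. The argument runs as in the special case $\bC=\Hilb_{c}(A)$, $X=*$: by \cite{kranz} the Davis-L\"uck assembly map is homotopy-equivalent to Kasparov's assembly map, and since $\Z$ is amenable and the Baum-Connes conjecture with coefficients in $C^{*}$-categories (reducing via Hilbert module constructions to the classical case) holds for $\Z$, the Kasparov assembly map for the family $\Fin$ is an equivalence. Note that for $\Z$ the family $\Fin$ coincides with $\VCyc\cap\Fin$ on the trivial group side and the classifying space $\uE_{\Fin}\Z\simeq\R$ is nicely finite-dimensional, so no further subtleties enter.

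The main obstacle is in the bookkeeping of the third paragraph: one needs to know that the Baum-Connes conjecture for $\Z$ with arbitrary $C^{*}$-category coefficients (not merely $C^{*}$-algebra coefficients) implies that $\mu^{DL}_{HK\cX^{\Z}_{\bC_{Y}}}$ is an equivalence. This requires citing the categorical form of Baum-Connes, for instance in the framework of \cite{bel-paschke}, and verifying that the $C^{*}$-categories $\bC_{Y}$ produced by Proposition \ref{wiothgerththerh} fall into the class for which this framework applies (existence of orthogonal AV-sums, strictness of the action). Once this is in place, the two previous paragraphs chain together to yield the containment, and the theorem follows.
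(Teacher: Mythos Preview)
Your proposal is correct and follows the same strategy as the paper: reduce to the generators $\Yo^{s}_{\loc}(Y_{min,min})$, apply Proposition~\ref{wiothgerththerh} to replace the twisted theory by $K\cX^{\Z}_{\bC_{Y}}$, and then invoke Baum--Connes with coefficients for the amenable group $\Z$. The paper resolves your flagged ``main obstacle'' concretely by passing to the free $\Z$-$C^{*}$-algebra $A^{f}(\bC_{Y})$ generated by $\bC_{Y}$ (as in \cite{joachimcat}) so that the comparison result of \cite{kranz} with the classical Kasparov assembly map applies directly, and it routes the identification of the two Davis--L\"uck assembly maps through the forget-control map $\gamma$ and \cite[Cor.~8.25]{desc} rather than appealing directly to the functoriality of $H(-)$ as you do.
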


 For a general $X$ in $\Z\BC$ we then apply Corollary \ref{erthokeorpthertgergregretg}
in order to conclude that  \eqref{qwdqwerrwqwdqweddwdwdwd1eee} is cartesian provided $\Yo^{s}_{\loc}(X)\in \Z\Sp\cX_{\loc}\langle \disc\rangle$, see  Corollary \ref{qerigojoqrfewfqewfqf}. 

%
%
%
%
%
%

We now restrict to bornological coarse spaces with trivial $\Z$-action 
  and provide very general conditions on $X$ ensuring that $\Yo^{s}_{\loc}(X)\in \Z\Sp\cX_{\loc}\langle \disc\rangle$.
In order to state the result   we employ the  coarse assembly map 
\begin{equation}\label{fqwefwqedwqedqwedqewd}
\mu_{F,X}:F\cO^{\infty}\bP(X)\to F(X)
\end{equation} for a strong coarse homology theory $F:\BC\to \bM$ and  a bornological coarse space $X$ which has been introduced in \cite[Def. 9.7]{ass}. Recall the notion of weakly finite asymptotic dimension \cite[Def. 10.3]{ass} and bounded geometry \cite[Def. 7.77]{buen}. In the following we consider the functors $K\cX_{\bC}^{\Z}( -\otimes \Z_{min,min})$, $K\cX_{\bC}^{\Z} (-\otimes \Z_{can,min})$ and $K\cX_{\bC}^{\Z} (-\otimes \Z_{can,min}\otimes \Z_{min,min})$ from $ \BC$ to $\Sp$ as strong non-equivariant coarse homology theories.

\begin{theorem}[{Theorem \ref{werigosetrrgertgerwgw}}]\label{werigosetrgertgerwgw}Assume one of the following:
\begin{enumerate}
\item\label{ijtgoertgegertgwee} $X$ has weakly finite asymptotic dimension.
\item \label{ijtgoertgegertgwee1} $X$ has bounded geometry and the three coarse  assembly maps $\mu_{K\cX_{ \bC}(-\otimes \Z_{min,min}),X}$,
$\mu_{K\cX^{\Z}_{\bC}(-\otimes \Z_{can,min}),X}$ and
$\mu_{K\cX^{\Z}_{\bC}(-\otimes \Z_{can,min}\otimes \Z_{min,min}),X}$
are   equivalences.
\end{enumerate}
Then $\Yo^{s}_{\loc}(X)\in \Z\Sp\cX_{\loc}\langle \disc\rangle$ and hence
\eqref{qwdqwerrwqwdqweddwdwdwd1eee} is cartesian.
  \end{theorem}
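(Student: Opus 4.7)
The aim is to verify that $\Yo^{s}_{\loc}(X)\in \Z\Sp\cX_{\loc}\langle \disc\rangle$; cartesianness of \eqref{qwdqwerrwqwdqweddwdwdwd1eee} then follows from Corollary \ref{erthokeorpthertgergregretg} applied to $E^{\Z}=K\cX_{\bC}^{\Z}$, using that this functor is strong and strongly additive by the construction in \cite{coarsek}.

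I would first reduce the weakly-finite-asymptotic-dimension case to the bounded-geometry case. The descent theorem underlying \cite[Def.~10.3]{ass} states that for any strong coarse homology theory $F:\BC\to \bM$ and any bornological coarse space $X$ of weakly finite asymptotic dimension, the coarse assembly map $\mu_{F,X}$ is an equivalence. Applying this to each of the three strong coarse homology theories $K\cX_{\bC}^{\Z}(-\otimes \Z_{min,min})$, $K\cX_{\bC}^{\Z}(-\otimes \Z_{can,min})$ and $K\cX_{\bC}^{\Z}(-\otimes \Z_{can,min}\otimes\Z_{min,min})$ shows that the three assembly equivalences of the second hypothesis hold automatically under the first.

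Under the second hypothesis, the three defining theories of $\ell:\Z\Sp\cX\to \Z\Sp\cX_{\loc}$ all invert the coarse assembly morphism $\Yo^{s}(\cO^{\infty}\bP(X))\to \Yo^{s}(X)$, giving $\Yo^{s}_{\loc}(X)\simeq \Yo^{s}_{\loc}(\cO^{\infty}\bP(X))$ in $\Z\Sp\cX_{\loc}$. It therefore suffices to show that $\Yo^{s}_{\loc}(\cO^{\infty}\bP(X))$ lies in $\Z\Sp\cX_{\loc}\langle \disc\rangle$. I would write $\cO^{\infty}\bP(X)$ as the filtered colimit over scales of the cones on Rips complexes $\bP_{n}(X)$ and each $\bP_{n}(X)$ as the filtered colimit of its skeleta. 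Bounded geometry of $X$ ensures that each attaching map in the skeletal filtration is a controlled pushout with ingredients $\Delta^{k}\times Y_{k}$ and $\partial\Delta^{k}\times Y_{k}$, where $Y_{k}$ is a discrete uniformly locally finite set of $k$-simplices endowed with the minimal bornology. Exactness of $\cO$ on such pushouts together with a uniform coarse contraction of each $\cO(\Delta^{k}\times Y_{k})$ onto $\cO(Y_{k})$ yields an induction on $k$ that keeps every skeleton of $\cO(\bP_{n}(X))$ inside $\Z\Sp\cX_{\loc}\langle \disc\rangle$; the filtered colimits in $k$ and in $n$ are then absorbed by the localizing closure.

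The main obstacle is arranging these coarse contractions uniformly across simplices and scales so that the induction on the skeletal filtration closes up; this is where the bounded-geometry assumption (or weakly finite asymptotic dimension, through the descent theorem of \cite{ass}) enters in an essential way.
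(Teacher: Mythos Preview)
Your approach to case \ref{ijtgoertgegertgwee1} is essentially the same as the paper's: reduce to strongly bounded geometry so that each Rips complex $P_{U}(X)$ is a \emph{finite}-dimensional simplicial complex, use that the assembly hypotheses make $\mu_{\Yo^{s},X}$ a local equivalence, and then run a skeletal induction using homotopy invariance and excision of $\Yo^{s}_{\loc}\circ\cO^{\infty}$ together with $\Yo^{s}_{\loc}(\cO^{\infty}(Y_{min,min,\disc}))\simeq \Sigma\Yo^{s}_{\loc}(Y_{min,min})$. Your stated ``main obstacle'' (uniform coarse contractions of $\cO(\Delta^{k}\times Y_{k})$) is not where the difficulty lies: these contractions are already encoded in the homotopy invariance of $\Yo^{s}\circ\cO^{\infty}$ established in \cite{equicoarse}. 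The genuine role of bounded geometry is simply that it makes $P_{U}(X)$ finite-dimensional, so the skeletal induction terminates; the remaining colimit over $U$ is then absorbed by the localizing closure.

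There is, however, a real gap in your treatment of case \ref{ijtgoertgegertgwee}. You propose to reduce it to case \ref{ijtgoertgegertgwee1} by observing that weakly finite asymptotic dimension forces the three coarse assembly maps to be equivalences. That is correct, but case \ref{ijtgoertgegertgwee1} also requires $X$ to have bounded geometry, and weakly finite asymptotic dimension does not imply this. Without bounded geometry the Rips complexes $P_{U}(X)$ need not be finite-dimensional, and your skeletal induction does not get off the ground. The paper does not attempt this reduction; instead it invokes \cite[Thm.~5.59]{buen}, which shows directly on the motivic level that $\Yo^{s}(X)\in\Sp\cX\langle\disc\rangle$ whenever $X$ has weakly finite asymptotic dimension, and then transports this along $\Res_{\Z}$ and $\ell$ to $\Z\Sp\cX_{\loc}\langle\disc\rangle$. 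You should either cite that result for case \ref{ijtgoertgegertgwee} or supply an argument that does not pass through the bounded-geometry hypothesis.
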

 
 It could be true that the  coarse PV-square \eqref{qwdqwerrwqwdqweddwdwdwd1eee}
is cartesian  for all $X$ in $\BC$.
 
The Condition  \ref{werigosetrgertgerwgw}.\ref{ijtgoertgegertgwee} on $X$ in 
implies that the coarse assembly map $\mu_{F,X}$ is an equivalence for any strong coarse homology theory.  But for the  coarse topological $K$-theory
$K\cX_{\Hilb_{c}(\C)}$ the coarse assembly map is an equivalence for
a much bigger class of bornological coarse spaces, e.g. discrete metric spaces of bounded geometry which admit a coarse embedding into a Hilbert space  \cite[Thm. 1.1]{yu_embedding_Hilbert_space}.
We therefore expect  that the Assumption \ref{werigosetrgertgerwgw}.\ref{ijtgoertgegertgwee1} is satisfied for many spaces not having finite asymptotic dimension.

%

%
  

We consider this note as a possibility to demonstrate the usage of results of coarse homotopy homotopy as developed in \cite{buen}, \cite{ass}, \cite{equicoarse}, \cite{coarsek} and \cite{bel-paschke}.

{\em Acknowledgements: The author thanks M. Ludewig
for fruitful discussion, in particular for insisting to give a detailed argument that \eqref{qwdqwewqrwdqweddwdwdwd1eee} is equivalent to the classical PV-sequence.
This work was supported by the CRC 1085 {\em Higher structures} funded by the DFG.}

\section{The coarse PV-square}
 \label{wkgopgsgfdg}

A $\Z$-equivariant $\bM$-valued coarse homology theory is a functor
$$E^{\Z}:\Z\BC\to \bM$$ from the category $\Z\BC$ of $\Z$-bornological coarse spaces to a cocomplete stable $\infty$-category $\bM$ such that the functor $E^{\Z}$   is coarsely invariant, excisive, $u$-continuous and vanishes on flasques \cite[Def. 3.10]{equicoarse}.  There exists a universal $\Z$-equivariant coarse homology theory \cite[Def. 4.9]{equicoarse}
$$\Yo^{s}:\Z\BC\to \Z\Sp\cX\ ,$$ where $ \Z\Sp\cX$ is a presentable stable $\infty$-category called the $\infty$-category of coarse motivic spectra.

The category $\Z\BC$ admits a symmtric monoidal structure $\otimes$ described in \cite[Ex. 2.17]{equicoarse}. It induces a presentably symmetric monoidal structure on $\Z\Sp\cX$  such that functor $\Yo^{s}$
refines essentially uniquely to a symmetric monoidal functor \cite[Sec. 4.3]{equicoarse}. 

If $E^{\Z}:\Z\BC\to \bM$ is a $\Z$-equivariant coarse homology theory, then by \cite[Cor. 4.10]{equicoarse} it essentially
uniquely factorizes as the composition of  $\Yo^{s}$ and a colimit-preserving functor  $E^{\Z}:  \Z\Sp\cX\to \bM$  denoted by the same symbol.

If $X$ is an object of $\Z\BC$,  then  the functor 
$E^{\Z}(X\otimes -):\Z\BC\to \bM$ is again a $\Z$-equivariant coarse homology theory
called the twist of $E^{\Z}$ by $X$.

We will encounter additional conditions and structures on $E$:
\begin{enumerate}
\item continuity \cite[Def. 5.19]{equicoarse}
\item strongness \cite[Def. 4.19]{equicoarse}
\item strong additivity \cite[Def. 3.12]{equicoarse}
\item transfers \cite[Def. 2.53]{coarsetrans}\ .
\end{enumerate}

For any $\Z$-set $Y$ we can form the objects $Y_{min,max}$ and $Y_{min,min}$ in $\Z\BC$ obtained by equipping $Y$ with the minimal coarse structure (whose maximal entourage is $\diag(Y)$) and the minimal bornology (consisting of the finite subsets) or the maximal bornology (consisting of all subsets), respectively.
The group $\Z$ has a canonical $\Z$-coarse structure generated by the entourages
$U_{r}:=\{(n,m)\in \Z\times \Z\mid |n-m|\le r\}$ for all $r$ in $\nat$. We let $\Z_{can,min}$ in $\Z\BC$ denote the corresponding object.

Let $E^{\Z}:\Z\BC\to \bM$ be an  equivariant coarse homology theory
with transfers. 
We start with 
 describing a  commutative  square 
  \begin{equation}\label{qwdqwewqwdqwed1eee} \xymatrix{ E^{\Z}( -\otimes \Z_{min,min})\ar[r]^-{\iota}\ar[d]& E^{  \Z}(-\otimes \Z_{can,min})\ar[d]^{\tr}\\0\ar[r]& E^{\Z}(-\otimes \Z_{can,min}\otimes \Z_{min,min})}
\end{equation} 
of   $\bM$-valued $\Z$-equivariant coarse homology theories.
 The map $\iota$ is induced by the  morphism $\Z_{min,min}\to \Z_{can,min}$ in $\Z\BC$
given by the identity of the underlying sets.
The map
  $\tr$   is the transfer along the  coarse covering $ \Z_{min,min}\to *$.   \begin{lem}\label{wigowegreewff}
The square \eqref{qwdqwewqwdqwed1eee} commutes.
\end{lem}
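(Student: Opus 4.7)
Since the square has a zero corner, its commutativity in the stable $\infty$-category $\bM$ amounts to exhibiting a null-homotopy of $\tr\circ\iota$ that is natural in the variable $X\in\Z\BC$. Because $E^{\Z}$ factors essentially uniquely through the universal theory $\Yo^{s}$, it suffices to produce such a null-homotopy at the motivic level and transport it along the canonical factorisation.

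The plan is first to unwind $\tr\circ\iota$ using the span-description of the transfer from \cite[Def.~2.53]{coarsetrans}. Pulling back the coarse covering
$X\otimes \Z_{can,min}\otimes\Z_{min,min}\to X\otimes \Z_{can,min}$
along the morphism $\mathrm{id}\otimes\iota$ yields the coarse covering
$X\otimes\Z_{min,min}\otimes\Z_{min,min}\to X\otimes \Z_{min,min}$,
so by naturality of the transfer we can rewrite
\[
\tr\circ E^{\Z}(\mathrm{id}\otimes\iota)\;\simeq\; E^{\Z}(\mathrm{id}\otimes\iota\otimes\mathrm{id}_{\Z_{min,min}})\circ \tr'\ ,
\]
where $\tr'$ is the transfer along the pulled-back covering. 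The null-homotopy would then be obtained by exhibiting that the rewritten composite factors through an object on which $E^{\Z}$ vanishes. A natural candidate uses the $\Z$-equivariant shear $(m,n)\mapsto(m-n,n)$, which converts the diagonal $\Z$-action on $\Z_{can,min}\otimes\Z_{min,min}$ into one in which the first $\Z_{can,min}$-factor is fixed by $\Z$; in this presentation translation in that factor provides a candidate equivariant flasque structure. The flasque-vanishing property of equivariant coarse homology theories \cite[Def.~3.10]{equicoarse} would then kill the composite, producing the sought null-homotopy.

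The main obstacle is verifying that the shear combined with translation indeed furnishes a genuine equivariant flasque structure compatible with the transfer $\tr'$ in $\Z\PBC$, in particular that the disjoint-union map witnessing flasqueness is a morphism of $\Z$-bornological coarse spaces despite the minimal bornology on the $\Z_{min,min}$-factor (so that the bornological condition on this disjoint union is delicate). Should the direct construction prove awkward, an alternative strategy is to argue universally in $\Z\Sp\cX^{\mathrm{tr}}$ by identifying the composite span with a morphism in the Burnside-type category that is manifestly null via its presentation as a pullback of parallel arrows, thereby obtaining the 2-cell by a purely formal manipulation of the universal transfer structure.
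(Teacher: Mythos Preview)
Your first two reductions are exactly right and match the paper: naturality of the transfer gives $\tr\circ\iota\simeq(\iota\otimes\id)\circ\tr'$, and the shear $(m,n)\mapsto(m-n,n)$ replaces the diagonal $\Z$-action by the action on the second factor alone. The gap is in the last step. Translation on $\Res^{\Z}(\Z_{can,min})$ does \emph{not} give a flasque structure: the shift $f(n)=n+1$ is close to the identity, but $f^{k}(\Z)=\Z$ for every $k$, so the required condition that $f^{k}(\Z)$ eventually avoids each bounded set fails. In particular there is no way to exhibit the composite as factoring through a flasque object---indeed $E^{\Z}(-\otimes\Res^{\Z}(\Z_{can,min})\otimes\Z_{min,min})$ is generally nonzero (it is equivalent to $\Sigma E^{\Z}(-\otimes\Z_{min,min})$), so no flasqueness argument can kill the whole target. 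Your stated worry about bornology is not the real obstruction; the obstruction is condition~(2) in the definition of flasqueness itself.

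The paper repairs this by post-composing, rather than interposing, an extra map: after the shear one applies the Mayer--Vietoris boundary $\partial^{MV}$ for the decomposition $\Res^{\Z}(\Z)=\nat\cup(-\nat)$. On the $\Z_{can,min}$ side both halves $\pm\nat$ \emph{are} flasque, so $\partial^{MV}$ is an equivalence there and post-composition loses nothing. On the $\Z_{min,min}$ side the decomposition is coarsely disjoint, so $\partial^{MV}$ vanishes. Commutativity of the resulting ladder then gives the null-homotopy of $\tr\circ\iota$. Your alternative Burnside-category suggestion is too vague to carry weight: there is no evident reason the composite span should be null in $\Z\Sp\cX^{\mathrm{tr}}$ by a purely formal manipulation, and in fact the argument genuinely uses the geometry of $\Z_{can,min}$ versus $\Z_{min,min}$ via the asymmetry of $\partial^{MV}$.
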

\begin{proof}
We have the following commutative diagram
\begin{equation}\label{qewfqwefdewqdewdqwerereeredewqdeeeee}
\xymatrix{ E^{  \Z}(-\otimes \Z_{min,min})\ar[r]^{\iota}\ar[d]^{\tr}&E^{  \Z}(-\otimes \Z_{can,min})\ar[d]^{\tr}\\ E^{\Z}( -\otimes \Z_{min,min}\otimes \Z_{min,min}) \ar[r] \ar[d]_{!}^{\simeq}&E^{\Z}( -\otimes \Z_{can,min}\otimes \Z_{min,min})\ar[d]_{!}^{\simeq}\\ E^{\Z}( -\otimes \Res^{\Z}(\Z_{min,min})\otimes \Z_{min,min})  \ar[r] \ar[d]_{\partial^{MV}}^{0}&E^{\Z}( -\otimes \Res^{\Z}(\Z_{can,min})\otimes \Z_{min,min})\ar[d]_{\partial^{MV}}^{\simeq}\\ \Sigma E^{  \Z}  ( - \otimes \Z_{min,min})\ar@{=}[r]  & \Sigma E^{  \Z}  ( - \otimes \Z_{min,min})}\ .
\end{equation} 
The upper three horizontal  maps are all induced from $\Z_{min,min}\to \Z_{can,min}$.
The map marked by $!$ is induced by the equivariant map of $\Z$-sets
$$X\times \Z\times \Z\to X\times \Res^{\Z}(\Z)\times \Z\ , \quad (x,n,m)\mapsto (x,n-m,m)\ .$$ The morphism  $\partial^{MV}$ is the Mayer-Vietoris boundary  map associated to the decomposition of $\Res^{\Z}(\Z)$ into $(\nat,-\nat)$. 
Its left instance vanishes since this decomposition of $\Res^{\Z}(\Z_{min,min})$ is coarsely disjoint. 
Using the Mayer-Vietoris sequence we see that  right instance of $\partial^{MV}$ is an equivalence
since the subsets $\pm \nat$ of $\Res^{\Z}(\Z_{can,min})$ are flasque.   
The commutativity of the upper square encodes the naturality of the transfer, and the commutativity of the lower square   reflects the  naturality of the  Mayer-Vietoris boundary. 
The middle square arrises from applying $E^{\Z}$ to a commutative square in $\Z\BC$ and therefore commutes, too.

 The commutative  diagram \eqref{qewfqwefdewqdewdqwerereeredewqdeeeee}
yields a filler of  \eqref{qwdqwewqwdqwed1eee}.   \end{proof}

We now fix $X$ in $\Z\BC$ and consider the $\Z$-equivariant coarse homology theory   \begin{equation}\label{qw3erqw3asded}
E_{X}^{\Z}(-):=E^{\Z}(X\otimes -):\Z\BC\to \bM
\end{equation} obtained from $E^{\Z}$ by twisting with $X$.
\begin{ddd}
The commutative square \begin{equation}\label{qwdqwewqwdqwedefe1eee}
\xymatrix{E_{X}^{Z}(\Z_{min,min})
\ar[r]^{\iota}\ar[d]&E_{X}^{\Z}(\Z_{can,min})\ar[d]^{\tr}\\ 0\ar[r]&E_{X}^{\Z}(\Z_{can,min}\otimes \Z_{min,min})}
\end{equation}
is called the coarse PV-square associated to $E^{\Z}$ and $X$.
\end{ddd}

\section{The coarse PV-sequence}
In this section we are interested in conditions on $E^{\Z}$ and $X$ ensuring that the coarse PV-square \eqref{qwdqwewqwdqwedefe1eee} is cartesian, i.e. that \begin{equation}\label{weqfoijoiwqjdoieewdqwedqewd}
E_{X}^{Z}(\Z_{min,min})\stackrel{\iota}{\to}E_{X}^{\Z}(\Z_{can,min})\stackrel{\tr}{\to} E_{X}^{\Z}(\Z_{can,min}\otimes \Z_{min,min})
\end{equation}   is a part of 
a fibre sequence. In this case it will be called the coarse PV-sequence. 

Let 
$i: \Z\BC_{min}\to \Z\BC$  denote the inclusion of the full subcategory of $\Z$-bornological coarse spaces with the minimal bornology.  We let $i^{*}$ and $i_{!}$ denote the  operations of restriction and  left Kan extension along $i$. Recall from \cite[Sec. 5.4]{equicoarse} that a coarse homology theory $F^{\Z}: 
\Z\BC\to \Sp$ is continuous if the canonical transformation \begin{equation}\label{afdasdfqwefq}
 i_{!}i^{*}F^{\Z}\to F^{\Z}
\end{equation}  is an equivalence.  
\begin{ddd}\label{qrigfjqorfqew}We call $ F^{\Z}_{c}:=i_{!}i^{*}F^{\Z}$ the continuous approximation of $F^{\Z}$.\end{ddd}
We apply this construction to the functor  $E_{X}^{\Z}$  from \eqref{qw3erqw3asded} and get a continuous equivariant
coarse homology theory $E_{X,c}^{\Z}$.

  The functor $E_{X,c}^{\Z}$  gives rise to a functor
$$HE^{\Z}_{X,c}:\Z\Orb\to \bM\ , \quad S\mapsto E^{\Z}_{X,c}(S_{min,max}\otimes \Z_{can,min})$$
from the orbit category of $\Z$ to $\bM$ and an associated Davis-L\"uck assembly map  $\mu^{DL}_{HE^{\Z}_{X,c}}$ given in   \eqref{wefqwedqwdeewdeqwd}.  Unfolding the definition,  
the Davis-L\"uck assembly map is   equivalent to the map
\begin{equation}\label{arfefewfqfewqdqweded}
\colim_{B\Z'} E_{X,c}^{\Z}(  \Z_{min,max}\otimes \Z_{can,min}) \to 
E^{\Z}_{X,c}( \Z_{can,min})
\end{equation} induced by the projections $ \Z_{min,max}\to *$, where $\Z'$ acts on $Z_{min,max}$ by translations.

%
 
The main theorem of the present section is:
\begin{theorem}\label{weijtgowegferwerwgwergre}
 Assume:
\begin{enumerate}
\item $X$  has the minimal bornology and is discrete.
\item $E^{\Z}$   is strong and strongly additive.
\end{enumerate}
 Then the PV-square \eqref{qwdqwewqwdqwedefe1eee}   is cartesian if and only if $\mu_{HE^{\Z}_{X,c}}^{DL}$ is an equivalence.
\end{theorem}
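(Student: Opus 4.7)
The strategy is to reduce both the cartesian-ness of the PV-square and the equivalence of the Davis--L\"uck assembly map to the same cofibre sequence in $\bM$, namely
\begin{equation*}
E_{X}^{\Z}(\Z_{min,min}) \xrightarrow{1 - \sigma} E_{X}^{\Z}(\Z_{min,min}) \to E_{X}^{\Z}(\Z_{can,min})\, ,
\end{equation*}
where $\sigma$ denotes the action on $E_{X}^{\Z}(\Z_{min,min})$ induced by functoriality from the translation action of the generator of $\Z$ on $\Z_{min,min}$.

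For the PV-square side, I would follow the proof of \Cref{wigowegreewff}. The change of variables $\Z_{can,min}\otimes\Z_{min,min}\cong\Res^{\Z}(\Z_{can,min})\otimes\Z_{min,min}$ together with the Mayer--Vietoris boundary for the decomposition $(\mathbb{N},-\mathbb{N})$ of $\Res^{\Z}(\Z_{can,min})$ (which is an equivalence because $\pm\mathbb{N}$ are flasque there, using strongness of $E^{\Z}$) yields an equivalence $E_{X}^{\Z}(\Z_{can,min}\otimes\Z_{min,min})\simeq\Sigma E_{X}^{\Z}(\Z_{min,min})$. Hence the PV-square is cartesian if and only if
\begin{equation*}
E_{X}^{\Z}(\Z_{min,min})\to E_{X}^{\Z}(\Z_{can,min})\to\Sigma E_{X}^{\Z}(\Z_{min,min})
\end{equation*}
is a fibre sequence, equivalently, $E_{X}^{\Z}(\Z_{can,min})$ is the cofibre of some endomorphism $\alpha$ of $E_{X}^{\Z}(\Z_{min,min})$. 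A chase through the diagram \eqref{qewfqwefdewqdewdqwerereeredewqdeeeee}, tracking the effect of the generator of $\Z$ on the two copies of $\Z_{min,min}$ appearing there, identifies $\alpha$ with $1-\sigma$.

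For the Davis--L\"uck side, by \eqref{arfefewfqfewqdqweded} the map $\mu^{DL}_{HE^{\Z}_{X,c}}$ takes the form $\colim_{B\Z'} E^{\Z}_{X,c}(\Z_{min,max}\otimes\Z_{can,min}) \to E^{\Z}_{X,c}(\Z_{can,min})$, whose target equals $E_{X}^{\Z}(\Z_{can,min})$ because $\Z_{can,min}\in\Z\BC_{min}$ so that $i^{*}i_{!}E_{X}^{\Z}\simeq E_{X}^{\Z}$ on such spaces. The standard cofibre sequence for colimits over $B\Z'$ then expresses this being an equivalence as the condition that
\begin{equation*}
E^{\Z}_{X,c}(\Z_{min,max}\otimes\Z_{can,min})\xrightarrow{1-\sigma'} E^{\Z}_{X,c}(\Z_{min,max}\otimes\Z_{can,min})\to E_{X}^{\Z}(\Z_{can,min})
\end{equation*}
is a cofibre sequence, where $\sigma'$ is induced by the translation action of the generator of $\Z'$ on $\Z_{min,max}$. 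The crucial step is then the identification of $\Z'$-objects
\begin{equation*}
E^{\Z}_{X,c}(\Z_{min,max}\otimes\Z_{can,min})\simeq E_{X}^{\Z}(\Z_{min,min})
\end{equation*}
with $\sigma'$ corresponding to $\sigma$. For this I would combine: (a) the hypothesis that $X$ is discrete with minimal bornology, so that twisting by $X$ keeps everything inside $\Z\BC_{min}$; (b) strong additivity of $E^{\Z}$ to handle the decomposition of $\Z_{min,max}$ into its translation orbits; and (c) a cofinality argument identifying a representing pair in the comma category computing the left Kan extension $i_{!}i^{*}E_{X}^{\Z}$, whose inclusion into $\Z_{min,max}\otimes\Z_{can,min}$ is canonical and $\Z'$-equivariant.

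The main obstacle is precisely this third identification. The product $\Z_{min,max}\otimes\Z_{can,min}$ has no non-trivial $\Z$-invariant bounded subsets, so the cofinality argument cannot proceed by restricting to invariant bounded subspaces and must instead invoke the external $\Z'$-symmetry in a non-trivial way; the bookkeeping for how $\sigma'$ is transported to $\sigma$ under the resulting identification is the principal technical point. The identification of the endomorphism $\alpha$ in the first step with $1-\sigma$ is a secondary but analogous concern, requiring careful tracking of the generator through the Mayer--Vietoris boundary. Once both identifications are in place, matching the two cofibre sequences yields the theorem in both directions.
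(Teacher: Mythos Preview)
Your proposal has a genuine gap in the first step. You claim that the PV-square being cartesian is equivalent to the assertion that $E_{X}^{\Z}(\Z_{can,min})$ is the cofibre of some endomorphism $\alpha$ of $E_{X}^{\Z}(\Z_{min,min})$, and that a diagram chase identifies $\alpha$ with $1-\sigma$. But the PV-square fixes a \emph{specific} map $E_{X}^{\Z}(\Z_{can,min})\to \Sigma E_{X}^{\Z}(\Z_{min,min})$ (namely $\tr$ composed with the Mayer--Vietoris equivalence), and the square is cartesian precisely when $\iota$ identifies $E_{X}^{\Z}(\Z_{min,min})$ with the fibre of that specific map. You have not computed this fibre, nor shown that the comparison map from $E_{X}^{\Z}(\Z_{min,min})$ to it is governed by $1-\sigma$. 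The diagram \eqref{qewfqwefdewqdewdqwerereeredewqdeeeee} only supplies the nullhomotopy (the filler of the square); it does not identify the fibre of $\tr$. In fact, the identification of the PV-sequence with the $(1-\sigma)$-cofibre sequence is the content of Proposition~\ref{werkgjowergwerfwerferfewrfwerf}, which is proved \emph{after} the theorem and uses its hypotheses.

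The paper's route is substantially different and supplies exactly the missing ingredient. It factors the transfer as $\tr=\ev\circ\coass_{X,\Z_{can,min}}$ through the coassembly map into $\lim_{B\Z'}E_{X}^{\Z}(\Z_{can,min}\otimes\Z_{min,min})$, and shows that $\ev$ presents the cofibre of $\lim_{B\Z'}\iota$ via a web of fibre sequences and an abstract lemma about such webs (Lemma~\ref{afafadsadsf}). This reduces cartesian-ness of the PV-square to $\coass_{X,\Z_{can,min}}$ being an equivalence (Proposition~\ref{weriojtgowtgerf}), but already this step needs non-trivial input: the split injectivity of $\lim_{B\Z'}\iota$ (Lemma~\ref{tgot0grefwefefewf}.\ref{sfdgsfgdsfds2}) is obtained from the geometric cone functor $\cO^{\infty}$, the motivic cone boundary, and a coarse assembly argument for $\Res^{\Z}(\R)$ (Lemma~\ref{qirogfqrfewfewfqwef}). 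The coassembly is then compared to the forget-control map $\gamma_{E_{X}^{\Z}}$ (Proposition~\ref{wreigjwerijogweferfwefr}), and finally $\gamma_{E_{X}^{\Z}}$ is identified with the Davis--L\"uck assembly map via the external result \cite[Cor.~8.25]{desc} (Proposition~\ref{werijgoerwgrewferfwf}). None of this machinery is visible in your proposal; the ``diagram chase'' you invoke would have to reproduce it, and in particular produce an a~priori description of the fibre of $\tr$ independent of whether the square is cartesian.
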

\begin{proof}
 We let $\Z'$ be a second copy of $\Z$ which acts on $\Z_{min,min}$ by translations. Then $\Z'$ acts by functoriality on the coarse homology  theory $E^{\Z}_{X}(  -\otimes \Z_{min,min})$. The coarse covering
$\Z_{min,min}\to *$ in $\Z'$-equivariant, where $\Z'$ acts trivially on $*$. As a consequence, the transfer map $\tr$ for $E^{\Z}_{X}$ along the coarse coverings 
$  -\otimes \Z_{min,min}\to  -$ has a factorization
$$\tr:E_{X}^{\Z}(-)\stackrel{\coass_{X}}{\to} \lim_{B\Z'}E_{X}^{\Z}(-\otimes \Z_{min,min})\stackrel{\ev}{\to} E_{X}^{\Z}(-\otimes \Z_{min,min})\ .$$

\begin{ddd} For $Z$ in $\Z\BC$
we call $$\coass_{X,Z}:E^{\Z}_{X}(Z)\to  \lim_{B\Z'}E^{\Z}_{X}(Z\otimes \Z_{min,min})$$ the coassembly map for the object $Z$.
\end{ddd}

In a stable $\infty$-category like $\bM$  finite limits commute with colimits.  Since
$\lim_{B\Z'}$ is a finite limit 
the  functor
$\lim_{B\Z'}E^{\Z}(- \otimes \Z_{min,min})$
is   again a $\Z$-equivariant coarse homology theory. 
We will keep $X$ in the notation since later we will study properties of the coassembly map which may depend on the choice of $X$.
 
\begin{prop}\label{weriojtgowtgerf}
Assume that  $E^{\Z}$ is strong and the coassembly maps   $\coass_{X,\Z_{min,min}}$ is an equivalence. Then the following assertions are equivalent: 
\begin{enumerate} 
\item  The coassembly map    $\coass_{X,\Z_{can,min}}$  is an equivalence.
\item The coarse $PV$-square    \eqref{qwdqwewqwdqwedefe1eee}  is cartesian.
\end{enumerate}
\end{prop}
\begin{proof}
We consider the commutative diagram
\begin{equation}\label{qewfqwefdewqdewdrerqwerereeredewqdeeeee}
\xymatrix{ E_{X}^{  \Z}(  \Z_{min,min})\ar[r]^{\iota}\ar[d]_{\simeq}^{\coass_{X,\Z_{min,min}}}&E_{X}^{  \Z}(  \Z_{can,min})\ar[d]^{\coass_{X,Z_{can,min}}}\ar@/^3.5cm/[dd]^{\tr}\\ \lim_{B\Z'} E_{X}^{\Z}(  \Z_{min,min}\otimes \Z_{min,min}) \ar[r]^{\lim_{B\Z'}\iota} \ar@{..>}[dr]^{0} &\lim_{B\Z'}E_{X}^{\Z}(   \Z_{can,min}\otimes \Z_{min,min})\ar[d]^{\ev} \\     & E_{X}^{\Z}(   \Z_{can,min}\otimes \Z_{min,min}) }\ .
\end{equation} 
The vanishing of the composition $\ev\circ \lim_{B\Z'}\iota$ is a consequence of Lemma \ref{wigowegreewff}.  We claim that  $\ev$ presents 
the cofibre of  $ \lim_{B\Z'}\iota$ 

For the moment left us assume the claim. 
If $\coass_{X,Z_{can,min}}$ is an equivalence, then $\tr$ represents the cofibre of $\iota$
and  the coarse PV-square    \eqref{qwdqwewqwdqwedefe1eee}  is cartesian.
Vice versa, if the coarse PV-square    \eqref{qwdqwewqwdqwedefe1eee}  is cartesian,  
 $\coass_{X,Z_{can,min}}$ is an equivalence by an application of the Five Lemma.

In order to show the claim  we form the commutative diagram  \begin{equation}\label{ewqfqwefwedqwedqwdewd}
\xymatrix{\lim_{B\Z'}E^{\Z}_{X}(\Z_{min,min}\otimes \Z_{min,min})\ar[r]^{\ev}\ar[d]^{\lim_{B\Z'}\iota}& E^{\Z}_{X}(\Z_{min,min}\otimes \Z_{min,min})\ar[d]_{0}^{\iota}\ar[r]^{1-\sigma}&E^{\Z}_{X}(\Z_{min,min}\otimes \Z_{min,min})\ar[d]^{\iota}\\\lim_{B\Z'}E^{\Z}_{X}(\Z_{can,min}\otimes \Z_{min,min})\ar[r]^{\ev}\ar[d]& E^{\Z}_{X}(\Z_{can,min}\otimes \Z_{min,min})\ar[d]\ar[r]_{0}^{1-\sigma}&E^{\Z}_{X}(\Z_{can,min}\otimes \Z_{min,min})\ar[d]\\\lim_{B\Z'} Q\ar[r]\ar[d]^{0}&  Q\ar[r]^{1-\sigma}&  Q\\&&}
\end{equation}
in   $\bM$.
Here $Q$ is the object of $\bM$  defined as the cofibre of the middle map denoted by $\iota$    with the induced action of $\Z'$. The symbol $\sigma$ stands for the action of the generator of $\Z'$. 
The horizontal sequences are fibre sequences  reflecting the usual presentation of the  fixed points of a $\Z'$-object in a stable $\infty$-category.
The vertical sequences are fibre sequences by construction.
We now have the following assertions:
\begin{lem}\label{tgot0grefwefefewf}\mbox{}\begin{enumerate}
\item \label{sfdgsfgdsfds} The map $E^{\Z}_{X}(\Z_{min,min}\otimes \Z_{min,min})\to E^{\Z}_{X}(\Z_{can,min}\otimes \Z_{min,min})$ vanishes.
\item \label{sfdgsfgdsfds1}The group $\Z'$ acts trivially on $E^{\Z}_{X}(\Z_{can,min}\otimes \Z_{min,min})$.
\item\label{sfdgsfgdsfds2} The map $\lim_{B\Z'}\iota$ is split injective.
 \end{enumerate}\end{lem}
These facts imply that  the  maps  marked by $0$  in the diagram \eqref{ewqfqwefwedqwedqwdewd}  vanish.
 From Lemma \ref{afafadsadsf} below applied to the web \eqref{ewqfqwefwedqwedqwdewd}  
 we then get a tcommutative triangle
 \begin{equation}\label{}
\xymatrix{&\lim_{B\Z'}E^{\Z}_{X}(\Z_{can,min}\otimes \Z_{min,min})\ar[dr]^{\ev}\ar[dl]&\\\lim_{B\Z'}Q\ar[rr]^{\simeq}&&Q}
\end{equation} 
 which solves our task.
 
 We consider a web of fibre sequences
\begin{equation}\label{regwergefefefewe}
\xymatrix{A\ar[r]\ar[d]^{a}&B\ar[d]^{0}\ar[r]&C\ar[d]\\E\ar[d]^{l} \ar[r]^{j}&F
\ar[d]^{i}\ar[r]^{0}&G\ar[d]\\H
\ar[r]^{m} \ar[d]^{0}\ar[d]&I\ar[r]^{d}\ar[d]&J\ar[d]\\\Sigma A\ar[r]&\Sigma B\ar[r]&\Sigma C}
\end{equation} 
in some stable $\infty$-category where the maps marked by $0$ vanish.

\begin{lem}\label{afafadsadsf}
There exists a commutative triangle
 \begin{equation}\label{}
 \xymatrix{&E\ar[dr]^{j}\ar[dl]_{l}&\\ H\ar[rr]^{u}_{\simeq}&&F}
\end{equation} 
\end{lem}
\begin{proof}
The map $u$ is obtained from the universal property of $H$   as the cofibre of $a$ together with the fact that $ j\circ a\simeq 0$ witnessed be the upper left square. 
We have $j\simeq u\circ l$.

The potential inverse  $v:F\to H$ of $u$ is obtained from the universal property of $H$ as  the  fibre of $I\to J$ together
with the fact that  $d\circ i\simeq 0$  witnessed by the middle right square. 
We have $i\simeq m\circ v$.

 We claim that $u$ and $v$ are inverse to each other equivalences in the homotopy category of $\bM$. 
 To this end we must    consider equivalences between maps (we say homotopies)
 as $2$-categorical  data.
We first observe that $i\circ u= m$.   The map
 $u$ is  homotopy class of the pair of the map $j$ together with the homotopy 
 $\alpha:j\circ a\Rightarrow 0$.
 Similarly the map $m$ is the homotopy class of the map $i\circ j$ together with the homotopy $\beta:i\circ j\circ a\Rightarrow 0$. The commutativity of the
 left middle square expresses the fact that $i_{*}\alpha=\beta$.
We conclude that $i\circ u= m$. 

  We now calculate
 $i\circ u\circ v\circ  j= m\circ v\circ j= i\circ j
  $. Since $i$ is a  monomorphism and  $j$ is an epimorphism
  we conclude that $u\circ v=\id_{F}$.

We now show that $v\circ u= \id_{H}$.   The map
 $v$ is the homotopy class of a pair of the map $i$ and the zero homotopy $\sigma$ of $d\circ i$. The map $l$ is similarly a homotopy class of the pair  of the map $i\circ j$ and the zero homotopy $\kappa$ of $d\circ i\circ j$.
 In this picture the commutativity of the left middle square expresses the fact that
 $j^{*}\sigma=\kappa$. Alltogether we conclude that
 $j^{*}v=l$. We now calculate that 
   $v \circ u\circ l= v\circ j=  l$. Since $l$ is  is an epimorphism we conclude that  
   $v\circ u=\id_{H}$.\end{proof}

\begin{proof}[Proof of Lemma \ref{tgot0grefwefefewf}]
Assertion \ref{sfdgsfgdsfds}  is provided by the two lower squares of \eqref{qewfqwefdewqdewdqwerereeredewqdeeeee}.

We now show Assertion \ref{sfdgsfgdsfds1}.
 We use the isomorphism 
\begin{equation}\label{rwewfwefewfrwf}
  \Z_{can,min}\otimes \Z_{min,min}\to   \Res^{\Z}(\Z_{can,min})\otimes \Z_{min,min}\ , \quad (m,n)\mapsto (m-n,n)
\end{equation} 
in $\Z \BC$. The $\Z'$-action on  $   \Res^{\Z}(\Z_{can,min})\otimes \Z_{min,min}$ is   given by
$(k,(m,n))\mapsto (m-k,n+k)$. For given $k$  in $\Z'$ we can decompose this into the map
$c_{k}:(m,n)\mapsto (m-k,n)$ and $a_{k}:(m,n)\mapsto (m,n+k)$.
The map $c_{k}$ is close to the identity. By the coarse invariance of  coarse homology theories the  induced action of $k$ on  
$E^{\Z}_{X}(  \Res^{\Z}(\Z_{can,min})\otimes \Z_{min,min})$ is therefore the equivalent map induced by $a_{k}$.  The morphism 
$$\xymatrix{\Res^{\Z}(\Z_{can,min})\otimes \Z_{min,min}\ar[rr]^{a_{k}}\ar[dr]&&\Res^{\Z}(\Z_{can,min})\otimes \Z_{min,min}\ar[dl]\\&\Res^{\Z}(\Z_{can,min})&}$$
between coarse coverings
induces 
 the  commutative diagram
\begin{equation}\label{} 
\xymatrix{&E^{\Z}_{X}(  \Res^{\Z}(\Z_{can,min} ))\ar[dr]^{\simeq}_{\tr}\ar[dl]_{\simeq}^{\tr}&\\  E^{\Z }_{X}( \Res^{\Z}(\Z_{can,min})\otimes \Z_{min,min})\ar[rr]^{a_{k}}&&E^{\Z }_{X}( \Res^{\Z}(\Z_{can,min})\otimes \Z_{min,min})}
\end{equation}
 This shows that  the map induced by  $a_{k}$ is also  equivalent to the identity.

 We finally   show Assertion \ref{sfdgsfgdsfds2}. We use the geometric cone functor
 $\cO^{\infty}:\Z\UBC\to \Z\BC$ which sends a $\Z$-uniform bornological coarse space $Y$ to the $\Z$-bornological coarse space given by
 $\Z$-set $\R\times Y$ with the bornology generated by the subsets
 $[-n,n]\times B$ for all bounded subsets $B$ of $Y$ and $n$ in $\nat$, and the hybrid coarse structure (this is $\cO(Y)_{-}$ in the notation from \cite[Sec. 9]{equicoarse}).
 By \cite[Prop. 9.31]{equicoarse} we have a natural cone fibre sequence in $\Z\Sp\cX$ involving the cone boundary $\partial^{\cone}:\Yo^{s}(\cO^{\infty}(-))\to \Sigma \Yo^{s}(\cF(-))$ which is induced from the transformation 
 $d^{\cone}:\cO^{\infty}(-)\to \R\otimes \cF(-)$  between $\Z\BC$-valued functors  and the suspension equivalence $\Yo^{s}(\R\otimes-)\simeq \Sigma^{s} \Yo^{s}(-)$, where $\cF:\Z\UBC\to \Z\BC$ is the forgetful functor.
 
 We consider $\R$ as an object in $\Z\UBC$ with the action of $\Z$ by translations and the metric structures. We furthermore consider the invariant  subset $\Z$ of $\R$ with the induced structures. Note that $\cF(\Z)\cong \Z_{can,min}$ and that the inclusion $\Z_{can,min}\to \cF(\R)$ is a coarse equivalence. We let $\Z_{\disc}$ in $ \Z\UBC$  denote $\Z$ with the coarse structure replaced by the discrete one. Then the identity map of underlying sets  is a coarsification morphism $\Z_{\disc}\to \Z$ in $\Z\UBC$. We have  $\cF(\Z_{\disc})\cong \Z_{min,min}$, and by \cite[Prop. 9.33]{equicoarse}
 the induced map 
 $\Yo^{s}(\cO^{\infty}(\Z_{\disc}))\to\Yo^{s} (\cO^{\infty}(\Z))$ is an equivalence.
 We consider the following diagram:
 \begin{equation}\label{}
 \xymatrix{E^{\Z}_{X}(\cO^{\infty}(\Z_{\disc}))\ar[r]^{i}\ar[d]^{\coass_{X,\cO^{\infty}(\Z_{\disc})}}&E^{\Z}_{X}(\cO^{\infty}(\R))\ar[d]^{\coass_{X,\cO^{\infty}(\R)}}\\ 
\lim_{B\Z'} E^{\Z}_{X}(\cO^{\infty}(\Z_{\disc})\otimes \Z_{min,min})\ar[r]\ar[d]^{\lim_{B\Z'}\partial^{\cone}_{\Z_{\disc}}}&\lim_{B\Z'} E^{\Z}_{X}(\cO^{\infty}(\R)\otimes \Z_{min,min})\ar[d]^{\lim_{B\Z'} \partial^{\cone}_{\R}}\\
\lim_{B\Z'} \Sigma    E^{\Z}_{X}(\Z_{min,min}\otimes \Z_{min,min})  \ar[r]^{\lim_{B\Z'}\iota}&
\lim_{B\Z'} \Sigma    E^{\Z}_{X}(\Z_{can,min}\otimes \Z_{min,min})
}\ .
\end{equation}
The upper two horizontal maps are induced by $\Z_{\disc}\to \Z\to \R$.
At the lower right corner we used the identification 
$  E^{\Z}_{X}(\Z_{can,min}\otimes \Z_{min,min})\stackrel{\simeq}{\to}
  E^{\Z}_{X}(\cF(\R)\otimes \Z_{min,min})$
  induced by the coarse equivalence   $\Z_{can,min}\to \cF(\R)$.
  \end{proof}

It is clear that the following assertions imply Assertion \ref{tgot0grefwefefewf}.\ref{sfdgsfgdsfds2}. 
\begin{lem} \label{qirogfqrfewfewfqwef}
\mbox{}
\begin{enumerate}
\item \label{wtihgowgergwefw}$\coass_{X,\cO^{\infty}(\Z_{\disc})}$ and $\coass_{X,\cO^{\infty}(\R)}$
are equivalences.
\item \label{wtihgowgergwefw1} $\lim_{B\Z'}\partial^{\cone}_{\Z_{\disc}}$ and $\lim_{B\Z'}\partial^{\cone}_{\R}$ are equivalences.
\item\label{wtihgowgergwefw2} The map $i$ is split injective.
\end{enumerate}
\end{lem}
 \begin{proof}
 We start with Assertion \ref{wtihgowgergwefw}.
 By \cite[Prop. 9.35]{equicoarse} we have an equivalence 
 $\partial^{\cone}_{\Z_{\disc}}:\Yo^{s}(\cO^{\infty}(\Z_{\disc}))\simeq  \Sigma \Yo^{s}(\Z_{min,min})$.
 Hence
 $\coass_{X,\cO^{\infty}(\Z_{\disc})}$ is equivalent to the suspension of 
 $\coass_{X, \Z_{min,min}}$ which is an equivalence by assumption.

 Since the coassembly map  $\coass_{X,-}$ is a transformation between $\Z$-equivariant coarse homology theories
 the  fact that $\coass_{X,Z}$ is an equivalence only depends on the motive $\Yo^{s}(Z)$ in $\Z\Sp\cX$.
 In order to deal with $\cO^{\infty}(\R)$ we use that
 the composition
 $ \Yo^{s}\circ \cO^{\infty}:\Z\UBC\to \Z\Sp\cX$  is excisive
 and homotopy invariant by \cite[Prop. 9.36 \& 9.38]{equicoarse}. In particular, if we decompose
 $\R$   into the $\Z$-invariant subsets $\bigcup_{n\in \nat}[n,1/2+n]$ and
 $\bigcup_{n\in \nat}[n+1/2,n +1]$ and use the 
 homotopy equivalences $\Z\to \bigcup_{n\in \nat}[n,1/2+n]$, $n\mapsto n$
 and
 $\Z\to \bigcup_{n\in \nat}[n+1/2,n +1]$, $n\mapsto n+1$ in $\Z\UBC$,
 we get a Mayer-Vietoris sequence \begin{equation}\label{qwefqwedqewdqdqwed}
 \Yo^{s} ( \cO^{\infty}(\Z))\oplus \Yo^{s}( \cO^{\infty}(\Z) )  \to \Yo^{s}(\cO^{\infty}(\Z))\oplus \Yo^{s} ( \cO^{\infty}(\Z)) \to  \Yo^{s} (\cO^{\infty}(\R))\ .
\end{equation} 
Since $\coass_{X,\cO^{\infty}(\Z)}$ is already known to be an equivalence we  can use the Five Lemma  in order to conclude that $\coass_{X, \cO^{\infty}(\R)}$
  is an equivalence, too.
 
 We now show Assertion \ref{wtihgowgergwefw1}.
 It suffices to show that  the underlying maps $$\partial^{\cone}_{\Z_{\disc}}:
 E^{\Z}_{X}(\cO^{\infty}(\Z_{\disc})\otimes \Z_{min,min})\to \Sigma E^{\Z}_{X}(\Z_{min,min}\otimes Z_{min,min}) 
 $$ and $$\partial^{\cone}_{\R}: 
 E^{\Z}_{X}(\cO^{\infty}(\R)\otimes \Z_{min,min})\to \Sigma E^{\Z}_{X}(\cF(\R)\otimes Z_{min,min})$$ are equivalences.
 As already observed in the previous step, $\partial^{\cone}_{\Z_{\disc}}$ is an equivalence. We now discuss the case of $\partial^{\cone}_{\R}$. 
  We use the isomorphisms
 $$\cO^{\infty}(\R)\otimes \Z_{min,min}\to  \cO^{\infty}( \Res^{\Z}(\R))\otimes \Z_{min,min} \ , \quad (t,x,n)\mapsto  t,x-n,n) $$ and
  $$\cF(\R)\otimes  \Z_{min,min}\to \Res^{\Z}(\cF(\R))\otimes \Z_{min,min}
\ , \quad (x,n)\mapsto (x-n,n)$$
 in $\Z\BC$. 
 It therefore suffices to show that 
 $$\partial^{\cone}_{\Res^{\Z}(\R)}:
 E^{\Z}_{X}(  \cO^{\infty}( \Res^{\Z}(\R))\otimes \Z_{min,min})\to \Sigma E^{\Z}_{X}(\cF(\Res^{\Z}(\R))\otimes Z_{min,min})$$
 is an equivalence. By \cite[Prop. 7.12]{ass} the   uniform bornological  coarse space $\Res^{\Z}(\R)$ is coarsifying.  Since $E^{\Z}$ is assumed to be strong also $E^{\Z}_{X}(-\otimes \Z_{min,min})$ is a strong non-equivariant coarse homology theory. We can now conclude that 
 $\partial^{\cone}_{\Res^{\Z}(\R)}$  is equivalent to coarse assembly map $\mu_{E^{\Z}_{X}(-\otimes \Z_{min,min}),\cF(\Res^{\Z}(\R))}$ 
   from \cite[Def. 9.7]{ass}. Since $\cF(\Res^{\Z}(\R))$ (i.e. the bornological coarse space $\R$ with the metric structures) has weakly finite asymptotic dimension  the coarse assembly map $\mu_{E^{\Z}_{X}(-\otimes \Z_{min,min}),\cF(\Res^{\Z}(\R))}$ is an equivalence by \cite[Thm. 10.4]{ass}.

 We finally show Assertion \ref{wtihgowgergwefw2}.
 We consider the Mayer-Vietoris sequence \eqref{qwefqwedqewdqdqwed}. 
Using the intersection  with $\Z$ of   decomposition of $\R$ into the subsets  $\bigcup_{n\in \nat}[n,1/2+n]$ and
 $\bigcup_{n\in \nat}[n+1/2,n +1]$   we get an analogous
Mayer-Vietoris sequence for $\Yo^{s} (\cO^{\infty}(\Z_{\disc}))$.
The inclusion $\Z_{\disc}\to \R$  induces the map of Meyer-Vietoris sequences (since $\cO^{\infty}$ is invariant under coarsification we can omit the subscript $\disc$)
 $$\xymatrix{ \Yo^{s} ( \cO^{\infty}(\Z ) \ar[r]^{x\mapsto x\oplus \sigma (x)}\ar[d]^{x\mapsto x\oplus 0}&\ar@{-->}@/^-1cm/[l]_{x\oplus y\mapsto x}  \Yo^{s}(\cO^{\infty}(\Z))\oplus \Yo^{s} ( \cO^{\infty}(\Z))\ar@{=}[d]\ar[r]^-{e}& \Yo^{s} (\cO^{\infty}(\Z ))\ar[d]^{\alpha}\\\ar@/^1cm/@{..>}[u]^{(a,b)\mapsto a+b} \Yo^{s}(\cO^{\infty}(\Z))\oplus \Yo^{s} ( \cO^{\infty}(\Z))\ar[r]^{x\oplus y\mapsto x+y \oplus \sigma(x)+\sigma(y)}&   \Yo^{s}(\cO^{\infty}(\Z))\oplus \Yo^{s} ( \cO^{\infty}(\Z))\ar[r]& \Yo^{s} (\cO^{\infty}(\R))\ar@/^-1cm/@{..>}[u]_{\beta}}\ ,$$
 where $\sigma$ indicates the map induced by action of the generator of $\Z$.
 The existence of the split indicated by the dashed arrow implies that $e$ is 
 an epimorphism.
The left vertical arrow has a left-inverse (indicated by the left dotted arrow) such that corresponding left square commutes. It induces a map $\beta$ as indicated.   We have $\beta\circ \alpha \circ e\simeq e$ and hence $\beta\circ \alpha\simeq \id_{\Yo^{s} ( \cO^{\infty}(\Z))}$. Applying $E^{\Z}_{X}$ we obtain the desired  left-inverse $E^{\Z}_{X}(\beta)$ of $i\simeq E^{\Z}_{X}(\alpha)$.
 \end{proof}

This completes the proof of Proposition \ref{weriojtgowtgerf}. \end{proof}

 \begin{prop} \label{wtiohgwtrwerrefwre}If $E^{\Z}$ is strongly additive and 
$X$ is discrete, then $\coass_{X,\Z_{min,min}}$ is an equivalence.
\end{prop}
\begin{proof}
We first calculate the target of the coassembly map explicitly.
\begin{eqnarray}
\lim_{B\Z'}E^{\Z}(X\otimes \Z_{min,min}\otimes \Z_{min,min})&\stackrel{\eqref{rwewfwefewfrwf}}{\simeq}
&\lim_{B\Z'}E^{\Z}(X\otimes \Res^{\Z}(\Z_{min,min})\otimes \Z_{min,min})\nonumber\\ 
&\stackrel{!}{\simeq}&\lim_{B\Z'}\prod_{ \Res^{\Z}(\Z )}E^{\Z}(X\otimes   \Z_{min,min})\nonumber\\&\stackrel{\ev_{0}}{\simeq} &
E^{\Z}(X\otimes   \Z_{min,min})\label{wtrko0wergergrefwferf}
\end{eqnarray}
For the equivalence marked by $!$
we use the isomorphism  $$X\otimes \Res(\Z_{min,min})\otimes \Z_{min,min} \cong \bigsqcup^{\free}_{\Res^{\Z}(\Z )} X\otimes \Z_{min,min}$$ (see  \cite[Ex. 2.16]{equicoarse} for the free union) in $\Z\BC$.
At this point it is important that $X$ is discrete.
The equivalence $!$ then follows from the assumption that $E^{\Z}$ is strongly additive. The group $\Z'$ acts freely transitively on the index set $ \Res^{\Z}(\Z )$ by translations, and also on $\Z_{min,min}$. 
The equivalence $\ev_{0}$ is the projection onto the factor with index $0$ in $\Res^{\Z}(\Z )$. Using \cite[Lem. 2.59]{coarsetrans}, or more concretely    \cite[(2.22)]{coarsetrans}, we see that  the  composition of the transfer with  \eqref{wtrko0wergergrefwferf} is equivalent to the identity. 
Hence   \eqref{wtrko0wergergrefwferf} is an inverse equivalence for  $\coass_{X,\Z_{min,min}}$.
\end{proof}

 We recall the fibre sequence
of functors \begin{equation}\label{qewfqwedqewdewedq}
\Sigma^{-1}F^{\infty}\stackrel{\beta}{\to}  F^{0}\to F
\end{equation}
from $\Z\BC$ to $\Z\Sp\cX$
 introduced in \cite[Def. 11.9 \& (11.2)]{equicoarse}, where $\beta$ is called the motivic forget-control map and $F^{0}\simeq \Yo^{s}$. 
The  motivic forget control map induces a the forget control map
 \begin{equation}\label{fwerfwerferwfrefw}
\gamma_{E_{X}^{\Z}}: E_{X}^{\Z}(  F^{\infty}(\Z_{can,min}))\to \Sigma E_{X}^{\Z}(   F^{0}(\Z_{can,min}))\simeq  \Sigma E_{X}^{\Z}(   \Z_{can,min})\ .
\end{equation}

\begin{prop}\label{wreigjwerijogweferfwefr}
Assume:
\begin{enumerate}
\item $X$  has the minimal bornology and is discrete.
\item $E^{\Z}$ is strong  and strongly additive.
\end{enumerate}
 Then $\coass_{X,\Z_{can,min}}$ is an equivalence if and only if $\gamma_{E^{\Z}_{X}}$ is an equivalence.
\end{prop}
\begin{proof}

%
%
%

 
 The following commutative square \begin{equation}\label{adfasdfdssad}
\xymatrix{ \Sigma^{-1}E^{\Z}_{X}(   F^{\infty}(\Z_{can,min}))\ar[r]^{\gamma_{E^{\Z}_{X}}}\ar[d]^{\Sigma^{-1}\coass_{ X,F^{\infty}(\Z_{can,min})}}&E_{X}^{\Z}(  \Z_{can,min})\ar[d]^{\coass_{X, \Z_{can,min}}}\\ \lim_{B\Z'}\Sigma^{-1}E_{X}^{\Z}( F^{\infty}( \Z_{can,min})\otimes \Z_{min,min})
\ar[r]^{\lim_{B\Z'}\delta}& \lim_{B\Z'}\Sigma^{-1}E_{X}^{\Z}(    \Z_{can,min}\otimes \Z_{min,min})}
\end{equation}
 is at the heart of the proof of the split injectivity of the Davis-L\"uck assembly map for CP-functors given in \cite{desc}. In the present paper we reverse the flow of information and use it in order to deduce properties of the coassembly map 
 $\coass_{X, \Z_{can,min}}$.
 The map $\delta$   in \eqref{adfasdfdssad} is also induced by the forget control map $\beta$ in \eqref{qewfqwedqewdewedq}.
  
 Let $P_{U}(\Z_{can,min})$ in $\Z\UBC$  be the Rips complex  of $\Z_{can,min}$ of size $U$, where $U$ is any invariant  entourage of $\Z_{can,min}$.  
 By definition we have  $$F^{\infty}(\Z_{can,min})\simeq \colim_{U\in \cC_{\Z_{can,min}}} \Yo^{s}(\cO^{\infty}(P_{U}(\Z_{can,min})))\ .$$
 If
 $U_{1}=\{(n,m)\mid |n-m|\le 1 \}$, then we have a natural identification
 $\R\cong P_{U_{1}}(\Z_{can,min})$. We then observe that the maps
 $\R\to P_{U_{r}}(\Z_{can,min})$ are homotopy equivalences  in $\Z\UBC$ for all $r\in [1,\infty)$. This implies  the equivalence  $\Yo^{s}(\cO^{\infty}(\R))\stackrel{\simeq}{\to} F^{\infty}(\Z_{can,min})$.
The proof of  Lemma \ref{qirogfqrfewfewfqwef}.\ref{wtihgowgergwefw}  actually shows that   $\coass_{X,\cO^{\infty}(\R)}$ is an equivalence
provided $\coass_{X,\Z_{min,min}}$ is an equivalence. 
But this is the case by Proposition \ref{wtiohgwtrwerrefwre}.
We conclude that $\coass_{X,F^{\infty}(\Z_{can,min})}$ is an equivalence.
 Note that we used here the assumptions that $E^{\Z}$ is strongly additive and that $X$ is discrete. 
 
In order to show that $\lim_{B\Z'}\delta$ is an equivalence it suffices to show that the underlying map $\delta$ is one.   The isomorphism \eqref{rwewfwefewfrwf} induces the vertical equivalences in the commutative square \begin{equation}\label{fqwefqwefwedqdqde}
\xymatrix{ \Sigma^{-1}E_{X}^{\Z}( F^{\infty}( \Z_{can,min})\otimes \Z_{min,min})\ar[r]^{\delta}\ar[d]^{\simeq}& \Sigma^{-1}E_{X}^{\Z}(    \Z_{can,min}\otimes \Z_{min,min})\ar[d]^{\simeq}\\
  \Sigma^{-1}E_{X}^{\Z}( F^{\infty}(\Res^{\Z}( \Z_{can,min}))\otimes \Z_{min,min})
\ar[r]^{\delta'}& \Sigma^{-1}E_{X}^{\Z}(   \Res^{\Z}( \Z_{can,min})\otimes \Z_{min,min})}\ .
\end{equation} 
The map $\delta'$ is the coarse Baum-Connes assembly map  from \cite[Def. 9.7]{ass}
for the non-equivariant coarse homology theory $E^{\Z}_{X}(-\otimes \Z_{min,min})$ which is strong since $E^{\Z}$ was assumed to be strong.
  Since $\Res^{\Z}(\Z_{can,min})$ has finite asymptotic dimension this coarse Baum-Connes assembly map   is an equivalence by \cite[Thm. 10.4]{ass}. We conclude that the morphism $\lim_{B\Z'}\delta$ in  \eqref{adfasdfdssad}
is an equivalence. 

We now have shown that the morphisms  $\lim_{B\Z'}\delta $ and $
\coass_{X,F^{\infty}(\Z_{can,min})} $ in \eqref{adfasdfdssad} are equivalences.
Hence $\coass_{X, \Z_{can,min}}$ is an equivalence if and only of 
$\gamma_{E^{\Z}_{X}}$ is an equivalence.
 \end{proof}

\begin{prop}\label{werijgoerwgrewferfwf}
The Davis-L\"uck assembly map
 $\mu^{DL}_{HE_{X,c}^{\Z}}$ is equivalent to the forget control map 
 $\gamma_{E_{X}^{\Z}}$ in \eqref{fwerfwerferwfrefw}.
 \end{prop}
 \begin{proof}
 Recall that $E^{\Z}_{X,c}$ is the continuous approximation of $E^{\Z}_{X}$, see \eqref{afdasdfqwefq}.
  \begin{lem}\label{weiorgwergerfwf}
The forget control map $\gamma_{E_{X}^{\Z}}$ is  equivalent to  $\gamma_{E_{X,c}^{\Z}}$.
\end{lem}
\begin{proof}
The full category $\cC\cE$ of $\Z\Sp\cX$ of objects $W$ such that 
$E^{\Z}_{X,c}(W)\to  E^{\Z}_{X}(W)$ is an equivalence is localizing. It contains the objects  $\Yo^{s}(Y)$ for all $Y$ in $\Z\BC$ with the minimal bornology, so in particular $\Yo^{s}(\Z_{can,min})$
and $\Yo^{s}(\Z_{min,min})$.
 The Mayer-Vietoris sequence \eqref{qwefqwedqewdqdqwed} and the   equivalence
$\Yo^{s}(\cO^{\infty}(\Z))\simeq \Sigma \Yo^{s}(\Z_{min,min})$ 
 \cite[Prop. 9.33 \& 9.35]{equicoarse}
 imply that
$\Yo^{s}(\cO^{\infty}(\R))$ belongs to $\cC\cE$. Finally, in the proof of Proposition \ref{wreigjwerijogweferfwefr} we have seen that
$F^{\infty}(\Z_{can,min})\simeq \Yo^{s}(\cO^{\infty}(\R))$.
Therefore
$F^{\infty}(\Z_{can,min})$ belongs to $\cC\cE$.
The natural transformation  
$E^{\Z}_{X,c}\to E^{\Z}_{X}$ induces a commutative square 
$$\xymatrix{E^{\Z}_{X,c}(F^{\infty}(\Z_{can,min}))\ar[r]^{\gamma_{E_{X,c}^{\Z}}}\ar[d]&\Sigma E_{X,c}(\Z_{can,min})\ar[d]\\
E^{\Z}_{X}(F^{\infty}(\Z_{can,min}))
\ar[r]^{\gamma_{E_{X}^{\Z}}}&
 \Sigma E_{X}(\Z_{can,min})}\ .$$
 The observations above imply that the vertical morphisms are equivalences.
\end{proof}

    By \cite[Cor. 8.25]{desc}  applied to  the continuous coarse homology theory $E^{\Z}_{X,c}$
 the Davis-L\"uck assembly map
 $\mu^{DL}_{HE_{X,c}^{\Z}}$ is equivalent to 
 the forget control map \begin{equation}\label{rfwreffewrferf}
 E_{X,c}^{\Z}(  F^{\infty}(\Z_{can,min})\otimes \Z_{max,max})\to   \Sigma E_{X,c}^{\Z}(   \Z_{can,min}\otimes \Z_{max,max})
\end{equation}
 induced by the map $\beta$ in \eqref{qewfqwedqewdewedq}. In the following we explain  that
 the additional factor  $\Z_{max,max}$    can be dropped. First of all,  since $\Z$ is torsion free,   the projection 
 $F^{\infty}(\Z_{can,min})\otimes \Z_{max,max}\to F^{\infty}(\Z_{can,min})$ is an equivalence. Furthermore, the projection
  $\Z_{can,min}\otimes \Z_{max,max}\to \Z_{can,min}$ is even a coarse equivalence.
 Thus the map in \eqref{rfwreffewrferf} is equivalent to  $\gamma_{E_{X,c}^{\Z}}$. By Lemma \ref{weiorgwergerfwf} 
 we conclude that
 $\mu^{DL}_{HE_{X,c}^{\Z}}$  is equivalent to $\gamma_{E_{X}^{\Z}}$.
 \end{proof}

The proof of  Theorem  \ref{weijtgowegferwerwgwergre} now follows from a combination of the assertions of Propositions \ref{qewfqwefdewqdewdrerqwerereeredewqdeeeee}, \ref{wtiohgwtrwerrefwre}, \ref{wreigjwerijogweferfwefr}  and \ref{werijgoerwgrewferfwf}
\end{proof}


We call a morphism in $\Z\Sp\cX$  a local equivalence it is sent to an 
equivalence by the coarse homology theories 
$E^{\Z}(-\otimes \Z_{min,min})$, $E^{\Z}(-\otimes \Z_{can,min})$, and
$E^{\Z}(-\otimes \Z_{can,min}\otimes \Z_{min,min} )$ appearing the coarse PV-square. Note that in view of Remark \ref{wrgijowergwregrwef} below we could drop the last entry of this list without changeing the notion of a local equivalence.  \begin{ddd}\label{wgkoowegkorpwfr} We let $$\ell:\Z\Sp\cX\to \Z\Sp\cX_{\loc}$$ be the localization at 
the local equivalences and set $\Yo^{s}_{\loc}:=\ell \circ \Yo^{s}$. \end{ddd}
Note that the notion of a local equivalence and $\ell:\Z\Sp\cX\to \Z\Sp\cX_{\loc}$
depend on the choice of $E^{\Z}$ though this is not indicated in the notation.
 The  three coarse homology theories listed above have colimit-preserving factorizations $\Z \Sp\cX_{\loc}\to \bM$ which will be denoted by the same symbols.  The property that the coarse PV-square \eqref{qwdqwewqwdqwedefe1eee} associated to $E^{\Z}$ and $X$ is cartesian only depends on the class $\Yo^{s}_{\loc}(X)$. 
 \begin{ddd} We let $\PV_{E^{\Z}}$ denote the full subcategory of $ \Z\Sp\cX_{\loc}$
 of objects $X$ for which the coarse PV-square \eqref{qwdqwewqwdqwedefe1eee} is cartesian.
 \end{ddd}
 By construction $\PV_{E^{\Z}}$ is  a localizing subcategory of $\Z\Sp\cX_{\loc}$.

 \begin{ddd} \label{eiojgwergrefwerf}We define
 $\Z\Sp\cX_{\loc}\langle DL \rangle$ as the localizing subcategory 
 generated by $\Yo^{s}_{\loc}(Y_{min,min})$ for all $\Z$-sets $Y$ such that
  $\mu^{DL}_{HE^{\Z}_{Y_{min,min},c}}$ is an equivalence.
 \end{ddd}

Theorem \ref{weijtgowegferwerwgwergre} now has the following immediate consequence.
 \begin{kor}\label{wrthiojwergwergwerrwfg}
 If  $E^{\Z}$  is strong    and strongly additive, then 
   $$\Z\Sp\cX_{\loc}\langle DL \rangle\subseteq  \PV_{E^{\Z}}\ .$$
 \end{kor}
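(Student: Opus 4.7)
The plan is to reduce the statement to Theorem \ref{weijtgowegferwerwgwergre}, which does essentially all the work. Since $\PV_{E^{\Z}}$ is by construction a localizing subcategory of $\Z\Sp\cX_{\loc}$, it suffices to check that every generator of $\Z\Sp\cX_{\loc}\langle DL\rangle$ lies in $\PV_{E^{\Z}}$; the inclusion of the whole localizing subcategory then follows automatically.

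First I would fix a generator, that is, a $\Z$-set $Y$ such that the Davis-L\"uck assembly map $\mu^{DL}_{HE^{\Z}_{Y_{min,min},c}}$ is an equivalence, and set $X := Y_{min,min}$ in $\Z\BC$. By definition $X$ carries the minimal bornology and the minimal coarse structure, so in particular $X$ is discrete and has the minimal bornology. Together with the standing assumptions that $E^{\Z}$ is strong and strongly additive, this means that both hypotheses \ref{gwjreogrgrfwerefwerf}.\ref{sdvsdfvfewcd} and \ref{gwjreogrgrfwerefwerf}.\ref{wetkogierfrewfwef} of Theorem \ref{weijtgowegferwerwgwergre} are satisfied for this choice of $X$ and $E^{\Z}$.

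Next I would invoke Theorem \ref{weijtgowegferwerwgwergre} directly: under these conditions, the coarse PV-square \eqref{qwdqwewqwdqwedefe1eee} associated to $E^{\Z}$ and $X = Y_{min,min}$ is cartesian if and only if $\mu^{DL}_{HE^{\Z}_{X,c}}$ is an equivalence, which is the defining property of $Y$. Hence the PV-square for $X = Y_{min,min}$ is cartesian, which by definition of $\PV_{E^{\Z}}$ gives $\Yo^{s}_{\loc}(Y_{min,min}) \in \PV_{E^{\Z}}$.

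Finally, since $\PV_{E^{\Z}}$ is a localizing subcategory of $\Z\Sp\cX_{\loc}$ containing all generators of $\Z\Sp\cX_{\loc}\langle DL\rangle$, the desired inclusion $\Z\Sp\cX_{\loc}\langle DL\rangle \subseteq \PV_{E^{\Z}}$ is immediate. There is no genuine obstacle in this argument: the entire analytic content, in particular the cone/Mayer-Vietoris machinery and the reduction of the coassembly map to the forget-control map, has already been absorbed into Theorem \ref{weijtgowegferwerwgwergre}, and the only observation required here is the compatibility of the generators of $\Z\Sp\cX_{\loc}\langle DL\rangle$ with the hypotheses of that theorem.
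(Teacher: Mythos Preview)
Your argument is correct and matches the paper's approach: the paper states this corollary as an immediate consequence of Theorem \ref{weijtgowegferwerwgwergre}, and your proof spells out exactly why, using that $\PV_{E^{\Z}}$ is localizing and checking it contains each generator $\Yo^{s}_{\loc}(Y_{min,min})$ of $\Z\Sp\cX_{\loc}\langle DL\rangle$ via the theorem.
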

   \begin{prop} If  $E^{\Z}$    is strong and strongly additive, then 
   $\Yo^{s}_{\loc}(\Z_{min,min})\in \Z\Sp\cX_{\loc}\langle  DL \rangle$.
 \end{prop}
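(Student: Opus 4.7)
My plan is to check the defining condition of $\Z\Sp\cX_{\loc}\langle DL\rangle$ directly for $\Z_{min,min}$, by verifying that $\mu^{DL}_{HE^{\Z}_{\Z_{min,min},c}}$ is an equivalence. Since $\Z_{min,min}$ is discrete with the minimal bornology and $E^{\Z}$ is assumed strong and strongly additive, Proposition \ref{werijgoerwgrewferfwf} combined with Proposition \ref{wreigjwerijogweferfwefr} reduces this to showing that $\coass_{\Z_{min,min},\Z_{can,min}}$ is an equivalence. I will actually prove the slightly stronger assertion that $\coass_{\Z_{min,min},Y}$ is an equivalence for every $Y$ in $\Z\BC$, by adapting the proof of Proposition \ref{wtiohgwtrwerrefwre} so that the twist $X=\Z_{min,min}$ itself, rather than the right-hand factor, supplies the free $\Z$-structure.

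The geometric input is the $\Z\BC$-isomorphism $W\otimes\Z_{min,min}\cong\Res^{\Z}(W)\otimes\Z_{min,min}$, $(w,n)\mapsto(-n\cdot w,n)$, which is well defined for every $W$ in $\Z\BC$ because entourages on a $\Z$-bornological coarse space are $\Z$-invariant. Applying this isomorphism to the last two factors of $\Z_{min,min}\otimes Y\otimes\Z_{min,min}$ (after commuting), together with the decomposition $\Res^{\Z}(\Z_{min,min})\cong\bigsqcup^{\free}_{\Z}*$ and distributivity of $\otimes$ over free unions, I would obtain
\[
\Z_{min,min}\otimes Y\otimes\Z_{min,min}\;\cong\;\bigsqcup^{\free}_{\Z}(Y\otimes\Z_{min,min}).
\]
Strong additivity then yields $E^{\Z}(\Z_{min,min}\otimes Y\otimes\Z_{min,min})\simeq\prod_{\Z}V$ where $V:=E^{\Z}(Y\otimes\Z_{min,min})$; the same manipulation applied to $\Z_{min,min}\otimes Y$ identifies the source of the coassembly with the same object $V$.

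Tracking the $\Z'$-action on the outer $\Z_{min,min}$-factor through these isomorphisms, I expect it to become a twisted shift $(k\cdot f)(a)=\phi_{k}(f(a+k))$ on $\prod_{\Z}V$, where $\phi_{\bullet}\colon\Z\to\End(V)$ is the functorial $\Z$-action on $V$ induced by translation on the inner $\Z_{min,min}$-factor. The reparametrization $g(a):=\phi_{a}(f(a))$ then intertwines this twisted shift with the plain shift, exhibiting the $\Z'$-object $\prod_{\Z}V$ as the coinduced module $\Coind^{\Z}_{e}V_{0}$, with $V_{0}$ denoting $V$ after forgetting the intrinsic $\Z$-structure. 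Consequently $(-)^{h\Z'}$ collapses the target to $V_{0}\simeq V$, which matches the source, and the $\ev_{0}$-projection furnishes an inverse to $\coass_{\Z_{min,min},Y}$ exactly as in the final step of the proof of Proposition \ref{wtiohgwtrwerrefwre}, using \cite[Lem.~2.59]{coarsetrans}.

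The main technical obstacle is the homotopy-coherent bookkeeping of the $\Z'$-action: one must verify that $a\mapsto\phi_{a}$ assembles into a genuine coherent $\Z$-action on $V$ in the stable $\infty$-category $\bM$ (this follows from functoriality of $E^{\Z}(Y\otimes-)$ applied to the translation action on $\Z_{min,min}$), and that the conjugation between twisted and plain shift is realized by a bona fide equivalence of $\Z'$-objects in $\bM$. Once that is in place, the identification of $\lim_{B\Z'}$ with $V$ and the identification of $\ev_{0}$ as inverse to $\coass_{\Z_{min,min},Y}$ are routine, completing the proof.
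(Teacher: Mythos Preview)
Your reduction via Propositions~\ref{werijgoerwgrewferfwf} and~\ref{wreigjwerijogweferfwefr} to showing that $\coass_{\Z_{min,min},\Z_{can,min}}$ is an equivalence is correct, but the argument you give for this coassembly equivalence has a genuine gap. The step where you claim
\[
Y\otimes\Res^{\Z}(\Z_{min,min})\otimes\Z_{min,min}\;\cong\;\bigsqcup^{\free}_{\Z}(Y\otimes\Z_{min,min})
\]
by ``distributivity of $\otimes$ over free unions'' fails whenever $Y$ is not discrete: the coarse structure of the free union $\bigsqcup^{\free}_{\Z}(Y\otimes\Z_{min,min})$ allows entourages of the form $\bigsqcup_{a}(U_{a}\times\diag)$ with the $U_{a}$ varying with $a$, whereas the tensor product with the discrete factor $\Res^{\Z}(\Z_{min,min})$ only contains entourages with one common $U$ for all components. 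This is precisely the point flagged in the proof of Proposition~\ref{wtiohgwtrwerrefwre} (``At this point it is important that $X$ is discrete''). In your set-up the role of the non-discrete factor is played by $Y=\Z_{can,min}$, so the free-union decomposition is not available and strong additivity does not apply to the object you actually have. Your ``slightly stronger assertion'' for all $Y$ is therefore false as stated, and the special case you need is exactly the one where the method breaks down.

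The paper's argument is much shorter and sidesteps this issue entirely. It does not go through the coassembly map: instead one observes that the forget-control map $\gamma_{E^{\Z}_{\Z_{min,min}}}$ coincides (after commuting tensor factors) with the map $\delta$ in diagram~\eqref{fqwefqwefwedqdqde} for $X=*$, and that map has already been shown to be an equivalence inside the proof of Proposition~\ref{wreigjwerijogweferfwefr}, as the coarse Baum--Connes assembly map for the non-equivariant space $\Res^{\Z}(\Z_{can,min})$, which has finite asymptotic dimension. Applying Proposition~\ref{werijgoerwgrewferfwf} then yields the equivalence of $\mu^{DL}_{HE^{\Z}_{\Z_{min,min},c}}$ immediately. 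So rather than attempting a fresh transfer computation, the intended proof simply recycles the equivalence of $\delta$ already established earlier.
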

 \begin{proof}
 The map $\delta$ in   \eqref{fqwefqwefwedqdqde} is the forget-control map
 $\gamma_{E^{\Z}_{\Z_{min,min}}}$, and it has been shown in the proof  of Lemma \ref{wreigjwerijogweferfwefr} that it is an equivalence.
 We now apply Proposition \ref{werijgoerwgrewferfwf}.
  \end{proof}

%

 \begin{ex}\label{wrthokwegfwerwrf}
 In this example we show that it can happen that $\Yo^{s}_{\loc}(*)\not\in \PV_{E^{\Z}}$. 
 Let $\bA$ be an additive category with strict $\Z$-action and let $E^{\Z}:=K\bA\cX^{\Z}$ denote the coarse algebraic $K$-homology \cite[Def. 8.8]{equicoarse}.
 This functor is strong \cite[Prop. 8.18]{equicoarse}, continuous \cite[Prop. 8.17]{equicoarse}, strongly additive \cite[Prop. 8.19]{equicoarse} and has transfers \cite[Thm. 1.4]{coarsetrans}. 
 We have an obvious equivalence $\mu^{DL}_{HK\bA^{\Z}_{*,c}}\simeq \mu^{DL}_{HK\bA^{\Z}}$.
 The Davis-L\"uck assembly map $\mu^{DL}_{HK\bA^{\Z}}$
 is known to be split-injective (see e.g. \cite{desc} through this is not the
 original reference),   and its cofibre can be expressed in terms of so-called nil-terms, see e.g.  \cite{L_ck_2016}.  
 So $\Yo^{s}_{\loc}(*)\in  \PV_{K\bA\cX^{\Z}}$ if and only if these nil-terms vanish.  
 If $R$ is a non-regular ring and $\bA=\Mod(R)$ with the 
 trivial $\Z$-action, then the nil-terms can be non-trivial and hence
  $\Yo^{s}_{\loc}(*)\not\in  \PV_{K\bA\cX^{\Z}}$.  \hB
 \end{ex}
 
We finally calculate the boundary map of the coarse PV-sequence \eqref{weqfoijoiwqjdoieewdqwedqewd} explicitly. 
Let $X$ be in $\Z\BC$. 
Recall that  $\Z'$ acts  $  E_{X}^{\Z}(  \Z_{min,min})$ by functoriality via its action on $\Z_{min,min}$ by translations.    \begin{prop}\label{werkgjowergwerfwerferfewrfwerf} If  the coassembly maps $\coass_{X, \Z_{min,min}}$ and $\coass_{X,\Z_{can,min}}$ are equivalences, then the coarse $PV$-sequence is equivalent to a fibre sequence
  $$ \Sigma^{-1}E_{X}^{\Z}(\Z_{can,min} )\to  E_{X}^{\Z}(  \Z_{min,min}) \stackrel{1-\sigma}{\to}    E_{X}^{\Z}(  \Z_{min,min})$$
  \end{prop}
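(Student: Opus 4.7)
The plan is to identify the coarse PV-sequence with the $1-\sigma$ sequence by first recognizing the cofibre of $\iota$ via Mayer--Vietoris, and then analyzing the resulting connecting map in terms of the $\Z'$-action.

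The coassembly hypotheses together with the argument in the proof of Proposition~\ref{weriojtgowtgerf} (specifically the part showing that $\mathrm{ev}$ presents the cofibre of $\lim_{B\Z'}\iota$) imply that the coarse PV-square \eqref{qwdqwewqwdqwedefe1eee} is cartesian. Hence the coarse PV-sequence is a fibre sequence whose leftward extension identifies its fibre as $\mathrm{fib}(\iota)\simeq \Sigma^{-1}E_{X}^{\Z}(\Z_{can,min}\otimes\Z_{min,min})$.

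Next I apply the Mayer--Vietoris identification already used in Lemma~\ref{wigowegreewff}: combining the shear isomorphism $(m,n)\mapsto(m-n,n)$ between $\Z_{can,min}\otimes\Z_{min,min}$ and $\Res^{\Z}(\Z_{can,min})\otimes\Z_{min,min}$ with the MV boundary for the decomposition $\Res^{\Z}(\Z)=\nat\cup(-\nat)$ (whose halves are flasque in the canonical coarse structure and whose intersection is $\{0\}$) produces an equivalence $E_{X}^{\Z}(\Z_{can,min}\otimes\Z_{min,min})\simeq \Sigma E_{X}^{\Z}(\Z_{min,min})$. Desuspending, the extended PV-sequence takes the form
\[
E_{X}^{\Z}(\Z_{min,min})\xrightarrow{\phi} E_{X}^{\Z}(\Z_{min,min})\xrightarrow{\iota} E_{X}^{\Z}(\Z_{can,min})
\]
for some connecting map $\phi$, and the claim reduces to showing $\phi\simeq 1-\sigma$.

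To identify $\phi$ with $1-\sigma$ I exploit the $\Z'$-equivariance of $\iota$: the group $\Z'$ acts through $\sigma$ on $E_{X}^{\Z}(\Z_{min,min})$ via translations on $\Z_{min,min}$, whereas on $E_{X}^{\Z}(\Z_{can,min})$ translations by $\Z'$ are close to the identity, so coarse invariance forces the induced action to be trivial. Therefore $\iota\circ(1-\sigma)\simeq 0$, so $1-\sigma$ factors uniquely through $\mathrm{fib}(\iota)\simeq E_{X}^{\Z}(\Z_{min,min})$, producing a self-map $\tilde\sigma$ of $E_{X}^{\Z}(\Z_{min,min})$ with $\phi\circ\tilde\sigma\simeq 1-\sigma$. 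The crucial point is that $\tilde\sigma$ is the identity; I plan to verify this by unravelling the MV boundary explicitly, noting that it records the difference of the two canonical inclusions $\{0\}\hookrightarrow\pm\nat$, and that under the shear isomorphism this difference transports to precisely the action $1-\sigma$ of the generator of $\Z'$. Once $\phi\simeq 1-\sigma$ is established, rotating the resulting cofibre sequence $E_{X}^{\Z}(\Z_{min,min})\xrightarrow{1-\sigma} E_{X}^{\Z}(\Z_{min,min})\to E_{X}^{\Z}(\Z_{can,min})$ yields the fibre sequence claimed in the proposition.

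The main obstacle will be this last identification, which requires careful bookkeeping of the $\Z'$-equivariance through the chain of equivalences defining the MV boundary and the shear isomorphism, to verify that $\tilde\sigma$ really is the identity rather than some other lift.
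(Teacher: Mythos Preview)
Your approach diverges from the paper's and leaves the crucial step as an unexecuted plan. The paper does \emph{not} attempt to compute the connecting map $\phi$ directly. Instead it uses the coassembly hypotheses in a much stronger way: the equivalence $\coass_{X,\Z_{can,min}}$ identifies $E_{X}^{\Z}(\Z_{can,min})$ with $\lim_{B\Z'}E_{X}^{\Z}(\Z_{can,min}\otimes\Z_{min,min})$, and then the standard fibre sequence $\lim_{B\Z'}M\to M\xrightarrow{1-\sigma}M$ (the middle row of \eqref{ewqfqwefwedqwedqwdewd}) gives the map $1-\sigma$ for free. What remains is to identify $E_{X}^{\Z}(\Z_{can,min}\otimes\Z_{min,min})$ with $\Sigma E_{X}^{\Z}(\Z_{min,min})$ \emph{as $\Z'$-objects}, which is a question about a single equivalence rather than about a boundary map; this is exactly the chain \eqref{gwegwergrefrew}, with the $\Z'$-equivariance handled by the argument from Lemma~\ref{tgot0grefwefefewf}.\ref{sfdgsfgdsfds1}.

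Your route, by contrast, only extracts from the coassembly hypotheses that the square is cartesian, and then tries to recover $1-\sigma$ by lifting through $\mathrm{fib}(\iota)$. Two problems: first, the factorization of $1-\sigma$ through $\mathrm{fib}(\iota)$ is \emph{not} unique---it depends on the choice of nullhomotopy of $\iota\circ(1-\sigma)$, and different choices yield lifts differing by maps into $\Omega E_{X}^{\Z}(\Z_{can,min})$---so you cannot speak of ``the'' map $\tilde\sigma$ without pinning down this choice. Second, the verification that $\tilde\sigma\simeq\id$ is precisely where all the content lies, and you only sketch a strategy (``unravelling the MV boundary'') rather than carry it out. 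That unravelling would amount to tracking the $\Z'$-action through the same shear isomorphism and flasqueness arguments the paper uses, so even if completed your argument would essentially reconstruct the paper's $\Z'$-equivariance check, but wrapped in an extra and unnecessary lifting problem. The cleaner move is to use the $\lim_{B\Z'}$ presentation from the start.
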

\begin{proof}
If $\coass_{X, \Z_{min,min}}$ and $\coass_{X,\Z_{can,min}}$ are equivalences, then in view of Proposition \ref{weriojtgowtgerf}, \eqref{qewfqwefdewqdewdrerqwerereeredewqdeeeee}  and \eqref{ewqfqwefwedqwedqwdewd} the coarse PV-sequence \eqref{weqfoijoiwqjdoieewdqwedqewd}  is equivalent
 to a fibre  sequence
 $$  E_{X}^{\Z}(\Z_{can,min} )\to  E_{X}^{\Z}(\Z_{can,min}\otimes \Z_{min,min}) \stackrel{1-\sigma}{\to} E_{X}^{\Z}(\Z_{can,min}\otimes \Z_{min,min}) \ .$$ The isomorphism \eqref{rwewfwefewfrwf}   yields the  first equivalence in the chain of equivalences
 \begin{equation}\label{gwegwergrefrew}E_{X}^{\Z}(\Z_{can,min}\otimes \Z_{min,min}) \simeq   E_{X}^{\Z}(\Res^{\Z}(\Z_{can,min})\otimes \Z_{min,min}) \simeq  \Sigma E_{X}^{\Z}(  \Z_{min,min}) \end{equation} 
  of $\Z'$-objects in $\bM$. 
  In order to see the second equivalence note that after applying  the isomorphism  \eqref{rwewfwefewfrwf} 
  the group $\Z'$ acts diagonally on $\Res^{\Z}(\Z_{can,min})\otimes \Z_{min,min})$. But arguing as in the proof of Lemma   \ref{tgot0grefwefefewf} we can replace the $\Z'$-action on the factor $\Res^{\Z}(\Z_{can,min})$ by the trivial action.    \end{proof} 
  
If $E^{\Z}$ is strong and  strongly additive, $\Yo^{s}_{\loc}(X)$ is in $\Z\Sp\cX_{\loc}\langle \disc\rangle$, and $\mu_{HE^{\Z}_{X,c}}$ is an equivalence, then  the assumptions of Proposition \ref{werkgjowergwerfwerferfewrfwerf} are satisfied.

 \begin{rem}\label{wrgijowergwregrwef}
 Using the equivalence \eqref{gwegwergrefrew} we observe that a morphism in $\Z\Sp\cX$ is a local equivalence if and only it is sent to an equivalence by $E^{\Z}(-\otimes \Z_{min,min})$ and $E^{\Z}(-\otimes \Z_{can,min})$. \hB
 \end{rem}

 \section{Topological coarse $K$-homology}

 Let $\bC$  be a    $C^{*}$-category with a strict $  \Z$-action     which admits  all  orthogonal AV-sums \cite[Def. 7.1]{cank}.  Then we can consider the spectrum-valued strong coarse homology theory
$$  K\cX^{\Z}_{\bC}:\Z\BC\to \Sp$$
  from \cite[Def. 6.1.2]{coarsek} for the homological functor $\Kcat:\nCcat\to \Sp$. In order to simplify the notation, in the present paper we omit the subscript $c$ appearing in this reference which  indicates continuity.   
 The coarse homology theory   $K\cX^{\Z}_{\bC}$ is  continuous  \cite[Thm. 6.3]{coarsek}, strong \cite[Prop. 6.5]{coarsek}, strongly additive
 \cite[Thm 11.1]{coarsek} and admits transfers by \cite[Thm 9.7]{coarsek}.
 So we can  take $ K\cX^{\Z}_{\bC}$ as an example for $E^{\Z}$ in the preceding sections and define the associated stable $\infty$-category $\Z\Sp\cX_{\loc}$ as in Definition \ref{wgkoowegkorpwfr}. 
 
\begin{ddd}We let $\Z\Sp\cX_{\loc}\langle \disc\rangle$ denote the localizing subcategory of $\Z\Sp\cX_{\loc}$ generated by the objects $\Yo^{s}_{\loc}(Y_{min,min})$ for all $Y$ in $\Z\Set$.  \end{ddd}Recall the Definition \ref{eiojgwergrefwerf} of  $\Z\Sp\cX_{\loc}\langle DL \rangle$.
 \begin{theorem}\label{weitgowergerfrwferfw}
 We have
$\Z\Sp\cX_{\loc}\langle \disc\rangle\subseteq \Z\Sp\cX_{\loc}\langle DL \rangle$.
 \end{theorem}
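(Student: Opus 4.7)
Both sides are localizing subcategories of $\Z\Sp\cX_{\loc}$, so the inclusion will follow from checking it on generators. By construction, $\Z\Sp\cX_{\loc}\langle\disc\rangle$ is generated by the objects $\Yo^{s}_{\loc}(Y_{min,min})$ as $Y$ ranges over $\Z$-sets. Unwinding Definition \ref{eiojgwergrefwerf}, it therefore suffices to show, for every $\Z$-set $Y$ and $X:=Y_{min,min}$, that the Davis-L\"uck assembly map $\mu^{DL}_{HK\cX^{\Z}_{\bC,X,c}}$ is an equivalence. If this holds, then $\Yo^{s}_{\loc}(X)$ lies directly in the generating family for $\Z\Sp\cX_{\loc}\langle DL\rangle$, and the inclusion of localizing subcategories is immediate.

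The first step of the plan is the identification of coefficients announced in the introduction: Proposition \ref{wiothgerththerh} provides an equivalence of $\Z$-equivariant coarse homology theories
$$K\cX^{\Z}_{\bC,X,c}\simeq K\cX^{\Z}_{\bC_{Y}}\ ,$$
where $\bC_{Y}$ is a $C^{*}$-category with strict $\Z$-action depending on $Y$. Precomposing with the functor $S\mapsto S_{min,max}\otimes \Z_{can,min}$ used to build $H(-)$, this yields an equivalence of functors $\Z\Orb\to\Sp$, hence an identification $\mu^{DL}_{HK\cX^{\Z}_{\bC,X,c}}\simeq \mu^{DL}_{HK\cX^{\Z}_{\bC_{Y}}}$. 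I expect this step to be essentially bookkeeping once Proposition \ref{wiothgerththerh} is in hand.

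The second and decisive step is to verify that $\mu^{DL}_{HK\cX^{\Z}_{\bC_{Y}}}$ is an equivalence. Since $\Z$ is torsion-free, the family $\Fin$ collapses to the trivial subgroup, and this Davis-L\"uck assembly map is the classical assembly map for topological $K$-theory of the amenable group $\Z$ with coefficients in $\bC_{Y}$. For $\Z$-$C^{*}$-algebra coefficients the Baum-Connes conjecture with coefficients is known for $\Z$; the comparison of \cite{kranz} then identifies the Davis-L\"uck and the Kasparov formulations, so that the assembly map is an equivalence. The categorical spectrum-level enhancement developed in \cite{bel-paschke} provides the same conclusion for $C^{*}$-categorical coefficients such as $\bC_{Y}$, and is what is directly applicable here.

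The main obstacle is the last point: one must make sure that the equivalence of the Kasparov and Davis-L\"uck assembly maps as well as the Baum-Connes theorem for $\Z$ both apply at the level of $C^{*}$-categories with strict $\Z$-action, and in particular to $\bC_{Y}$. Once this is secured by \cite{bel-paschke}, combining the two steps yields $\mu^{DL}_{HK\cX^{\Z}_{\bC,X,c}}\simeq \mu^{DL}_{HK\cX^{\Z}_{\bC_{Y}}}$ as an equivalence for every $\Z$-set $Y$, hence $\Yo^{s}_{\loc}(Y_{min,min})\in \Z\Sp\cX_{\loc}\langle DL\rangle$, and the theorem follows.
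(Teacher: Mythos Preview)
Your proposal is correct and follows essentially the same route as the paper: reduce to generators, invoke Proposition \ref{wiothgerththerh} to identify $K\cX^{\Z}_{\bC,Y_{min,min},c}\simeq K\cX^{\Z}_{\bC_{Y}}$, and then use the Baum--Connes conjecture with coefficients for $\Z$ together with the Kranz comparison to conclude that $\mu^{DL}_{HK\cX^{\Z}_{\bC_{Y}}}$ is an equivalence. Two small remarks: (i) the paper identifies the two assembly maps by passing through the forget-control map via \cite[Cor.~8.25]{desc} twice, whereas you do it directly from the equivalence of coarse homology theories---your way is shorter and perfectly fine; (ii) the concern you flag about $C^{*}$-categorical coefficients is resolved in the paper by noting that on homotopy groups $\mu^{DL}_{HK\cX^{\Z}_{\bC}}$ agrees with the Baum--Connes assembly map with coefficients in the free $\Z$-$C^{*}$-algebra $A^{f}(\bC)$ of \cite{joachimcat}, so one is back in the classical $C^{*}$-algebra setting where Baum--Connes for $\Z$ applies directly.
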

\begin{proof}
The main input is that the amenable group $\Z$ satisfies the Baum-Connes conjecture with coefficients. 
Using the main result of \cite{kranz}, see also \cite[Thm, 1.7]{bel-paschke}, on the level of homotopy groups  the Davis-L\"uck assembly map 
 $\mu_{HK\cX^{\Z}_{\bC}}$ from \eqref{wefqwedqwdeewdeqwd} is  isomorphic  to the Baum-Connes assembly map \eqref{eqwfwefwdwe} 
   with coefficients in the  free $\Z$-$C^{*}$-algebra $A^{f}(\bC)$ generated by $\bC$. The latter   was introduced in \cite[Def. 3.7]{joachimcat}. Consequently, we know that $\mu^{DL}_{HK\cX_{\bC}^{\Z}}$ is an equivalence. 
   
 Let $Y$ be a $\Z$-set.
 We form the $C^{*}$-category with $\Z$-action $\bV_{\bC}^{  \Z}(Y_{min,min}\otimes \Z_{min,min})$  by specializing \cite[Def. 4.19.2]{coarsek} (note that in the present paper  we use a different notation).  The 
  $\Z$-action is induced by functoriality from the right action on $\Z_{min,min}$. 
  As explained in \cite[Sec. 4]{coarsek} the objects of $ \bV_{\bC}^{  \Z}(Y_{min,min}\otimes \Z_{min,min})$ 
  are triples $(C,\rho,\mu)$, where $(C,\rho)$ is a $G$-object in the multiplier category $ \bM\bC$ of $\bC$  (see \cite[Def. 3.1]{coarsek}), and $\mu$ is an invariant  finitely additive measure on $Y_{min,min}\otimes \Z_{min,min} $ with values in multiplier projections on $C$
  such that $ \mu(\{(y,n)\})\in \bC$ for every point $(y,n)\in Y\times \Z$.
  The morphisms of $\bV_{\bC}^{  \Z}(Y_{min,min}\otimes \Z_{min,min})$  are   the invariant and $\diag(Y\times \Z)$-controlled  multiplier morphisms.  
  
  We need an AV-sum completion  $\bC_{Y}$ of   $\bV_{\bC}^{  \Z}(Y_{min,min}\otimes \Z_{min,min})$,  see \cite[Def. 7.1]{cank} for the notion of an AV-sum. 
  It turns out to be useful to work with  an explicit model for $\bC_{Y}$ given as follows.
The objects of $\bC_{Y}$ are again triples $(C,\rho,\mu)$  as above, but we replace the condition $\mu(\{(y,n)\})\in \bC$ (which prevents the existence of infinite AV-sums in $\bV_{\bC}^{  \Z}(Y_{min,min}\otimes \Z_{min,min})$) by the more general condition  that       the images of  $\mu(\{(y,n)\})$ for  all $(y,n)$ in $Y\times \Z$ are
  isomorphic to  AV-sums of families of  unital objects of $\bC$ (see \cite[Def. 2.14]{cank}). 
Morphisms $A: (C,\rho,\mu)\to (C',\rho',\mu')$   in this category are invariant $\diag(X\times \Z)$-controlled 
  morphisms $A:C\to C'$ in $\bM\bC$ such that $A\mu(\{(y,n)\})\in \bC$ for all $(y,n)$ in $Y\times \Z$. 
 Note that $\bC_{Y}$ contains  $\bV_{\bC}^{  \Z}(Y_{min,min}\otimes \Z_{min,min})$
 as  the full subcategory of unital objects.
  
%
%
%

%
%
%
%


We let $K\cX^{\Z}_{\bC,Y_{min,min},c}$ be the continuous approximation (see Definition \ref{wefqwedqwdeewdeqwd} ) of the coarse homology theory $K\cX^{\Z}_{\bC}(-\otimes Y_{min,min})$.

  \begin{prop}\label{wiothgerththerh}
We have an equivalence
$ K\cX_{\bC,Y_{min,min},c }^{\Z}\simeq K\cX_{\bC_{Y}}^{\Z}$ of  $\Z$-equivariant  coarse homology theories.
\end{prop}
\begin{proof}
We consider the inclusion of groups $\Z\to \Z\times \Z$ given by  $n\mapsto (n,0)$.
We let $\Z\times \Z$ act on $\bC$ via the projection onto the first factor.
Identifying $\Ind_{\Z}^{\Z\times \Z}(-)\simeq - \otimes \Z_{min,min}$
we have the induction equivalence \cite[10.5.1]{coarsek}
  \begin{equation}\label{qefqwedewdqwedqwd}
 K\cX^{\Z}_{\bC,Y_{min,min}}(  -) \stackrel{\simeq}{\to} K\cX^{  \Z\times \Z}_{\bC}(Y_{min,min}\otimes   (-)\otimes  \Z_{min,min})\ .
\end{equation}
On the r.h.s. the first copy of $\Z$ acts diagonally  on  $Y\times \Z\times (-)$, while the second copy only acts on $\Z_{min,min}$. 
We have a natural  isomorphism
$$Y\times  (-)\times \Z\stackrel{\cong}{\to}\ Y\times   \Z\times (-) \ , \quad  (x,- ,n)\mapsto (x,n,n^{-1}-)$$
of functors from $\Z\BC$ to $(\Z\times \Z)\BC$. On the target the first  factor of $\Z\times \Z$ only acts on $ Y\times  \Z$, while the second factor now acts diagonally on $ \Z\times (-)$. We get an equivalence
\begin{equation}\label{eqwfwedwqedewdqewd}
K\cX^{  \Z\times \Z}_{\bC}(Y_{min,min}\otimes   (-)\otimes  \Z_{min,min})\simeq K\cX^{  \Z\times \Z}_{\bC}(Y_{min,min}\otimes     \Z_{min,min}\otimes (-))\ .
\end{equation}  By definition \cite[Def. 6.1]{coarsek} of the coarse $K$-homology functor we have an equivalence
 $$K\cX^{  \Z\times \Z}_{\bC}(Y_{min,min}\otimes     \Z_{min,min}\otimes (-))\simeq \Kcat (\bV_{\bC}^{  \Z\times \Z}(Y_{min,min}\otimes \Z_{min,min}\otimes   (-)))\ .$$
We now construct a natural transformation 
  \begin{equation}\label{dafadfdsfadfafafasdfadfadsf}
\bV_{\bC}^{  \Z\times \Z}(Y_{min,min}\otimes \Z_{min,min}\otimes   Z)\to \bV^{\Z}_{\bC_{Y}}(Z)\ ,
\end{equation}
where 
  $Z$ runs  over  $\Z\BC_{min}$.    Let
     $(C,\rho,\mu)$ be an object $\bV_{\bC}^{  \Z\times \Z}(Y_{min,min}\otimes \Z_{min,min}\otimes   Z)$. The functor \eqref{dafadfdsfadfafafasdfadfadsf}   sends this object to an
 object $((C,\rho_{|\Z\times 1},\pr_{|Y\times \Z,*}\mu),\sigma,\kappa)$.  We first observe that $(C,\rho_{|\Z\times 1},\pr_{|Y\times \Z,*}\mu)$ is an object of $\bC_{Y}$. We define the measure $\kappa$ by  $\kappa(\{z\})=\mu(Y\times \Z\times \{z\})$. Note that $\kappa(\{z\})$ is an endomorphism of
 $(C,\rho_{|\Z\times 1},\pr_{|Y\times \Z,*}\mu)$ in $\bC_{Y}$.
 We further set $\sigma:=\rho_{|1\times \Z}$.
 Using that $Z$ has the minimal bornology we then observe that 
 $((C,\rho_{|\Z\times 1},\pr_{|Y\times \Z,*}\mu),\sigma,\kappa)$ indeed belongs to $\bV^{\Z}_{\bC_{Y}}(Z)$.
  
On morphisms the  functor \eqref{dafadfdsfadfafafasdfadfadsf} is given by the identity.
Naturality in $Z$ is obvious. The transformation
  identifies invariant and controlled morphisms on both sides. 
We conclude that the transformation is fully faithful.

We finally observe that it is essentially surjective. 
Let $((C,\tilde \rho,\tilde \mu),\sigma,\kappa)$ be an object of $\bV^{\Z}_{\bC_{Y}}(Z)$.
We define
$\mu$ such that $ \mu(\{(y,n,z)\}=\tilde \mu(\{(y,n)\})\kappa(z)$ and
$\rho$ such that  by $\rho_{(n,m)}=\tilde \rho_{n}\sigma_{m}$.
Then $(C,\rho,\mu)$ is a preimage in $\bV_{\bC}^{  \Z\times \Z}(Y_{min,min}\otimes \Z_{min,min}\otimes   Z)$.

Applying $\Kcat$ to the natural equivalence \eqref{dafadfdsfadfafafasdfadfadsf}
 to we get a natural  equivalence \begin{equation}\label{adfasq2fasdfdsf}
\Kcat (\bV_{\C}^{  \Z\times \Z}(X_{min,min}\otimes \Z_{min,min}\otimes   Z))\stackrel{\simeq}{\to}    K\cX_{\bC_{Y}}^{\Z}(Z)
\end{equation}
for $Z$ in  $\Z\BC_{min}$.  Composing
\eqref{qefqwedewdqwedqwd}, \eqref{eqwfwedwqedewdqewd} and \eqref{adfasq2fasdfdsf} we get the equivalence \begin{equation}\label{qewfewdqdewdqewd}
K\cX^{\Z}_{\bC,Y_{min,min}} \simeq   K\cX_{\bC_{Y}}^{\Z} 
\end{equation}
of functors on $\Z\BC_{min}$.   The desired equivalence is now given by
$$K\cX^{\Z}_{\bC,Y_{min,min},c} \stackrel{\eqref{afdasdfqwefq}}{\simeq} i_{!}i^{*}K\cX^{\Z}_{\bC,Y_{min,min}} \stackrel{\eqref{qewfewdqdewdqewd}}{\simeq}
i_{!}i^{*} K\cX_{\bC_{Y}}^{\Z} \stackrel{\simeq}{\to} K\cX_{\bC_{Y}}^{\Z} $$
where the last equivalence   is an equivalence since $ K\cX_{\bC_{Y}}^{\Z} $ is already continuous.
\end{proof}

 We can now finish the proof of Theorem \ref{weitgowergerfrwferfw}.
 We have   equivalences  of morphisms
 $$\mu^{DL}_{ HK\cX^{\Z}_{\bC,Y_{min,min},c}} \stackrel{ {\scriptsize \cite[Cor. 8.25]{desc}  }}{\simeq}  \gamma_{K\cX^{\Z}_{\bC,Y_{min,min},c}} \stackrel{Prop. \ref{wiothgerththerh}}{\simeq}
 \gamma_{K\cX^{\Z}_{\bC_{Y} }}  \stackrel{{\scriptsize \cite[Cor. 8.25]{desc}  } }{\simeq}   \mu^{DL}_{HK\cX^{\Z}_{\bC_{Y}}}\ .$$ 
At the beginning of this proof we have seen that $ \mu^{DL}_{HK\cX^{\Z}_{\bC_{Y} }}$ is an equivalence. 
We conclude that
$\mu^{DL}_{ HK\cX^{\Z}_{\bC,Y_{min,min},c}}$ is an equivalence, too.
It follows that  $\Z\Sp\cX_{\loc}\langle DL\rangle$ contains $\Yo^{s}_{\loc}(Y_{min,min})$ for all $\Z$-sets $Y$, and this implies the assertion of the theorem.
\end{proof}
Corollary   \ref{wrthiojwergwergwerrwfg} now implies:
\begin{kor}\label{qerigojoqrfewfqewfqf}
We have
$\Z\Sp\cX_{\loc}\langle \disc\rangle \subseteq \PV_{K\bX_{\bC}^{\Z}}$.
\end{kor}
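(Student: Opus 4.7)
The plan is to obtain the inclusion by chaining the two results that immediately precede the statement. First, Theorem \ref{weitgowergerfrwferfw} establishes, in the specific setting $E^{\Z} = K\cX^{\Z}_{\bC}$, the inclusion
\[
\Z\Sp\cX_{\loc}\langle \disc\rangle \subseteq \Z\Sp\cX_{\loc}\langle DL\rangle.
\]
So the first step is simply to invoke this containment, which has already been proved using the Baum--Connes conjecture with coefficients for $\Z$ together with the identification of the twisted continuous approximation $K\cX^{\Z}_{\bC, Y_{min,min}, c}$ with $K\cX^{\Z}_{\bC_{Y}}$ from Proposition \ref{wiothgerththerh}.

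Next, I would apply Corollary \ref{wrthiojwergwergwerrwfg}, which states that $\Z\Sp\cX_{\loc}\langle DL\rangle \subseteq \PV_{E^{\Z}}$ whenever $E^{\Z}$ is strong and strongly additive. To invoke this for $E^{\Z} = K\cX^{\Z}_{\bC}$, I need only recall the properties of $K\cX^{\Z}_{\bC}$ that were already collected at the beginning of Section 4: strongness follows from \cite[Prop. 6.5]{coarsek} and strong additivity from \cite[Thm. 11.1]{coarsek}. This gives
\[
\Z\Sp\cX_{\loc}\langle DL\rangle \subseteq \PV_{K\cX^{\Z}_{\bC}}.
\]

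Composing the two inclusions above yields $\Z\Sp\cX_{\loc}\langle \disc\rangle \subseteq \PV_{K\cX^{\Z}_{\bC}}$, as required. Since both ingredients are already established, there is no substantive obstacle: the corollary is a pure consequence of the chain $\langle \disc\rangle \subseteq \langle DL\rangle \subseteq \PV_{K\cX^{\Z}_{\bC}}$, where the nontrivial content sits entirely in Theorem \ref{weitgowergerfrwferfw} (the Baum--Connes input and the coefficient-category identification) and in Theorem \ref{weijtgowegferwerwgwergre} (which underlies Corollary \ref{wrthiojwergwergwerrwfg}).
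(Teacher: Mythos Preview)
Your proposal is correct and matches the paper's own argument: the corollary is stated immediately after Theorem \ref{weitgowergerfrwferfw} with the remark that Corollary \ref{wrthiojwergwergwerrwfg} now implies it, i.e., exactly the chain $\Z\Sp\cX_{\loc}\langle \disc\rangle \subseteq \Z\Sp\cX_{\loc}\langle DL\rangle \subseteq \PV_{K\cX^{\Z}_{\bC}}$ using that $K\cX^{\Z}_{\bC}$ is strong and strongly additive.
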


Let $A$ be a unital $C^{*}$-algebra with an action of $\Z$. We consider the $C^{*}$-category $\Hilb_{c}(A)$ of Hilbert $A$-modules  and compact operators which has the induced $\Z$-action \cite[Ex. 2.9]{cank}.
Then the square \eqref{qwdqwewqrwdqweddwdwdwd1eee} is cartesian by 
Corollary \ref{qerigojoqrfewfqewfqf}.   We let $\Z'$ denote the group which acts by automorphisms on $\Res^{\Z}(A)$ (via the identification of $\Z'$   with the original group $\Z$)
and on $\Z_{min,min}$ by translations. 
We let $\sigma$ denote the action by functoriality  of the generator of $\Z'$ on various derived objects. 
\begin{prop}\label{eorkjgwegreegrwegr9}
We have equivalences
\begin{enumerate}
\item \label{werjkngwerge}$K\cX^{\Z}_{\Hilb_{c}(A)}(\Z_{min,min})\simeq K(\Res^{\Z}(A))$
\item \label{werjkngwerge1} $K\cX^{\Z}_{\Hilb_{c}(A)}(\Z_{can,min})\simeq K(A\rtimes \Z)$
\item \label{werjkngwerge2}$K\cX^{\Z}_{\Hilb_{c}(A)}(\Z_{can,min}\otimes \Z_{min,min}) \simeq \Sigma K(\Res^{\Z}(A))$.
\end{enumerate}
Furthermore, the coarse PV-sequence
is equivalent to a fibre sequence
$$ \Sigma^{-1}K(A\rtimes \Z) \to  K(\Res^{\Z}(A))\stackrel{1-\sigma}{\to}  K(\Res^{\Z}(A))\ .$$
\end{prop}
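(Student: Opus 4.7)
The plan is to specialise the general machinery to $X=*$ and $E^{\Z}=K\cX^{\Z}_{\Hilb_{c}(A)}$. Since $K\cX^{\Z}_{\Hilb_{c}(A)}$ is strong, strongly additive, continuous and has transfers, and since $X=*$ is trivially discrete with the minimal bornology, the hypotheses of Theorem \ref{weijtgowegferwerwgwergre} and Proposition \ref{werkgjowergwerfwerferfewrfwerf} are available. Corollary \ref{qerigojoqrfewfqewfqf} already tells us that the PV-square is cartesian in this case. The work is thus (a) identifying the three corners of the square concretely in $K$-theoretic terms, and (b) extracting the fibre sequence with the map $1-\sigma$ in the middle from Proposition \ref{werkgjowergwerfwerferfewrfwerf}.

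For part \ref{werjkngwerge}, I would exploit the fact that $\Z_{min,min}\cong \Ind_{1}^{\Z}(*_{min,min})$ and use an induction equivalence along the lines of \eqref{qefqwedewdqwedqwd} to rewrite $K\cX^{\Z}_{\Hilb_{c}(A)}(\Z_{min,min})$ as the underlying non-equivariant coarse $K$-homology at the one-point space with coefficients in $\Hilb_{c}(\Res^{\Z}A)$, and then use Morita invariance of $\Kcat$ on $\Hilb_{c}(-)$ to arrive at $K(\Res^{\Z}(A))$. For part \ref{werjkngwerge1}, I would invoke the Baum-Connes conjecture with coefficients for the amenable group $\Z$ together with the comparison of Davis-L\"uck and Kasparov assembly maps (\cite{kranz}, see also \cite{bel-paschke}): this tells us $\mu^{DL}_{HK\cX^{\Z}_{\Hilb_{c}(A)}}$ is an equivalence, and its value at the terminal orbit $*$ matches both $K\cX^{\Z}_{\Hilb_{c}(A)}(\Z_{can,min})$ on one side and $K(A\rtimes\Z)$ on the other, by the computation of $K^{\Z}_{A}(*)$ recalled in the introduction. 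Part \ref{werjkngwerge2} follows by combining part \ref{werjkngwerge} with the equivalence \eqref{gwegwergrefrew} established inside the proof of Proposition \ref{werkgjowergwerfwerferfewrfwerf}, which gives $E^{\Z}(\Z_{can,min}\otimes \Z_{min,min})\simeq \Sigma E^{\Z}(\Z_{min,min})$ as $\Z'$-objects.

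To obtain the claimed fibre sequence, I would verify the two coassembly hypotheses of Proposition \ref{werkgjowergwerfwerferfewrfwerf}: $\coass_{*,\Z_{min,min}}$ is an equivalence by Proposition \ref{wtiohgwtrwerrefwre} (strong additivity plus discreteness of $*$), and $\coass_{*,\Z_{can,min}}$ is an equivalence by Proposition \ref{wreigjwerijogweferfwefr} reduced via Proposition \ref{werijgoerwgrewferfwf} to the equivalence of $\mu^{DL}_{HK\cX^{\Z}_{\Hilb_{c}(A),c}}\simeq \mu^{DL}_{HK\cX^{\Z}_{\Hilb_{c}(A)}}$, which is Baum-Connes again. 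Proposition \ref{werkgjowergwerfwerferfewrfwerf} then delivers the fibre sequence with middle map $1-\sigma$ acting on $K\cX^{\Z}_{\Hilb_{c}(A)}(\Z_{min,min})$, and substituting the identifications from \ref{werjkngwerge}--\ref{werjkngwerge2} produces the stated sequence.

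The principal obstacle I expect is checking that the $\Z'$-action used in Proposition \ref{werkgjowergwerfwerferfewrfwerf} on $K\cX^{\Z}_{\Hilb_{c}(A)}(\Z_{min,min})$ is carried under the induction plus Morita identification in part \ref{werjkngwerge} precisely to the functoriality action $\sigma$ on $K(\Res^{\Z}(A))$ induced by the $\Z$-action on $A$. This is a bookkeeping task, but it is the single place where the abstract framework must meet the concrete classical PV-sequence; once this compatibility is in place, matching $\Sigma^{-1}E^{\Z}_{*}(\Z_{can,min})$ with $\Sigma^{-1}K(A\rtimes \Z)$ is a direct consequence of part \ref{werjkngwerge1}.
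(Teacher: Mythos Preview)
Your strategy for parts \ref{werjkngwerge}, \ref{werjkngwerge2} and the final fibre sequence matches the paper's proof closely: induction along $\Z_{min,min}\cong \Ind_{1}^{\Z}(*)$ followed by Morita invariance for part \ref{werjkngwerge}, the equivalence \eqref{gwegwergrefrew} for part \ref{werjkngwerge2}, and Proposition \ref{werkgjowergwerfwerferfewrfwerf} for the fibre sequence. You also correctly anticipate that the real work lies in transporting the $\Z'$-action through the identification of part \ref{werjkngwerge}; the paper does exactly this by exhibiting explicit natural unitary isomorphisms $\phi\circ\sigma\Rightarrow\sigma\circ\phi$ and $\psi\circ\sigma\Rightarrow\sigma\circ\psi$ and invoking that $\Kcat$ sends unitarily equivalent functors to equivalent maps.

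The one place where your proposal diverges, and where there is a genuine gap, is part \ref{werjkngwerge1}. You propose to identify $K\cX^{\Z}_{\Hilb_{c}(A)}(\Z_{can,min})$ with $K(A\rtimes\Z)$ via the Baum--Connes conjecture and the comparison of assembly maps. This is circuitous and, as stated, incomplete: knowing that the Davis--L\"uck assembly map for $HK\cX^{\Z}_{\Hilb_{c}(A)}$ is an equivalence tells you nothing about its \emph{target} beyond the tautology $HK\cX^{\Z}_{\Hilb_{c}(A)}(*)=K\cX^{\Z}_{\Hilb_{c}(A)}(\Z_{can,min})$. To get $K(A\rtimes\Z)$ out of this you would still need to identify the coarse Davis--L\"uck functor with the Kasparov-theoretic functor $K^{\Z}_{A}$ from the introduction at the orbit $*$, and Baum--Connes plays no role in that step. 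The paper bypasses all of this: part \ref{werjkngwerge1} is obtained by a single direct citation of \cite[Prop.~2.8.3]{coarsek} applied with $X=*$ and $G=\Z$, which computes $K\cX^{\Z}_{\Hilb_{c}(A)}(\Z_{can,min})\simeq K(A\rtimes\Z)$ outright. You should replace your argument for \ref{werjkngwerge1} with this reference.
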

\begin{proof}
In order to prepare the argument for the second assertion we will actually show that
there is an equivalence $$K\cX^{\Z}_{\Hilb_{c}(A)}(\Z_{min,min})\simeq K(\Res^{\Z}(A))$$ of spectra with an action of $\Z'$.
Using that   $\Z_{min,min}\cong \Ind_{1}^{\Z}(*)$, by \cite[Cor. 10.5.2]{coarsek}  have an equivalence \begin{equation}\label{qwfqqewfeqwfewddq}
K\cX^{\Z}_{\Hilb_{c}(A)}(\Z_{min,min})\stackrel{\simeq}{\to} K\cX_{\Res^{\Z}(\Hilb_{c}(A))}(*)\ .
\end{equation}
Unfolding \cite[Def. 4.19]{coarsek} we get  
an isomorphism $ \bV_{\Res^{\Z}(\Hilb_{c}(A))}(*)\cong\Res^{\Z}(\Hilb_{c}(A))^{u}  $, where $(-)^{u}$ denotes the operation of taking the full subcategory of unital objects. We  therefore get an equivalence 
\begin{equation}\label{fewqwfqewd}
K\cX_{\Res^{\Z}(\Hilb_{c}(A))}(*)\simeq \Kcat(\Res^{\Z}(\Hilb_{c}(A)^{u}))\ .
\end{equation} 
We consider $A$ as a $C^{*}$-category $\bA$ with a single object.  We then have a fully faithful functor
$\psi:\bA\to \Res^{\Z}( \Hilb_{c}(A)^{u})$ which sends the unique object of $\bA$ to the Hilbert $A$-module given by $A$ with the right  $A$-multiplication and the scalar product $\langle a,a'\rangle:=a^{*}a'$. 
 Since $A$ is assumed to be unital the latter object  of $\Hilb_{c}(A)$ is indeed unital. The functor 
 $\bA\to \Res^{\Z}( \Hilb_{c}(A)^{u})$
 is a Morita equivalence \cite[18.15]{cank}.
Since $\Kcat$ is Morita invariant and extends the $K$-theory functor for $C^{*}$-algebras
 we get  the  equivalence \begin{equation}\label{adfadsfdfasff}
K(\Res^{\Z}(A))\simeq \Kcat(\bA) \stackrel{\simeq}{\to}\Kcat(\Res^{\Z}(\Hilb_{c}(A)^{u}))\ .
\end{equation}
The equivalence in Assertion \ref{werjkngwerge} is the composition of the equivalences \eqref{qwfqqewfeqwfewddq}, \eqref{fewqwfqewd} and \eqref{adfadsfdfasff}.

   We let $\sigma$ denote the generator of $\Z'$. Then  for $n$ in $\Z_{min,min}$ we have $\sigma(n)=n+1$. We will show that the composition 
 $\eqref{fewqwfqewd}\circ  \eqref{qwfqqewfeqwfewddq}$ and the equivalence
 \eqref{adfadsfdfasff}  preserve the action of $\sigma$ up to equivalence.
  Using  the explicit description of
 the equivalence \eqref{qwfqqewfeqwfewddq} given in the proof of \cite[Prop. 10.1]{coarsek}
  the equivalence $\eqref{fewqwfqewd}\circ  \eqref{qwfqqewfeqwfewddq}$ is induced by the functor
 $$\phi:\bV^{\Z}_{\Hilb_{c}(A)}(\Z_{min,min})\to \bV_{\Res^{\Z}(\Hilb_{c}(A))}(*)\cong \Res^{\Z}(\Hilb_{c}(A))^{u}\ .$$
 The functor $\phi$
 sends the object  $(C,\rho,\mu)$ of $\bV^{\Z}_{\Hilb_{c}(A)}(\Z_{min,min})$ to the submodule $C(0):=\mu(\{0\})C$ in $\Hilb_{c}(A)^{u}$.
 In contrast to the general case considered in  \cite[Prop. 10.1]{coarsek}
 here we can take this preferred image of the projection $\mu(\{0\})$.
 The functor $\phi$ furthermore sends a  morphism $B:(C,\rho,\mu)\to (C',\rho',\mu')$ in $\bV^{\Z}_{\Hilb_{c}(A)}(\Z_{min,min})$   to
 $\mu'(\{0\})B\mu(\{0\}):C(0)\to C'(0)$.

 Note that $$\phi(\sigma(C,\rho,\mu))=\phi( (C,\rho,\sigma_{*}\mu))  =(\sigma_{*}\mu)(\{0\})C=\mu(\{-1\})C=:C(-1)\ .$$
 Using the invariance of $\mu$ we see that the unitary  multiplier $\rho_{\sigma}$ restricts to a unitary multiplier  
 $\rho_{\sigma,-1,0}:C(-1) \to  \sigma C(0)$. 
 We can therefore define a natural unitary isomorphism 
 $v:\phi\circ \sigma\to \sigma\circ \phi$ such that its evaluation at $(C,\rho,\mu)$ is given by $\rho_{\sigma,-1,0}$. Since $\Kcat$ sends unitarily equivalent functors to equivalent maps \cite[Lem. 17.11]{cank} we conclude that   $\eqref{fewqwfqewd}\circ  \eqref{qwfqqewfeqwfewddq}$ preserves the action of $\sigma$ up to equivalence.
  
In order to show that    \eqref{adfadsfdfasff} commutes with $\sigma$ up to equivalence  we construct a natural unitary isomorphism
$w:\psi\circ \sigma\to \sigma \circ \psi$. The evaluation of $w$ at the unique object of $\bA$ is the unitary isomorphism
${}^{\sigma}(-):A\to \sigma A$ of Hilbert $A$-modules, where
 $ \sigma A$ is the vector space $A$ with the new    Hilbert $A$-module described in \cite[Ex. 2.10]{cank}, and $a\mapsto {}^{\sigma}a$
 is the action of $\Z'$ on $ A $.

  Assertion 
   \ref{werjkngwerge1}  follows from \cite[Prop. 2.8.3]{coarsek} applied to $X=*$ and $G=\Z$.
   
  Assertion \ref{werjkngwerge2} follows from  Assertion \ref{werjkngwerge} 
  and the equivalence in
   \eqref{gwegwergrefrew}.
   
Finally, the last assertion is a consequence of Proposition \ref{werkgjowergwerfwerferfewrfwerf}
and the observation that the equivalence in Assertion 
   \ref{werjkngwerge} preserves the $\sigma$-actions up to equivalence.
\end{proof}


\section{What is in $\Z\Sp\cX_{\loc}\langle \disc\rangle$ }

In this section we again consider the example $E^{\Z}=K\cX^{\Z}_{\bC}$.
By Corollary \ref{qerigojoqrfewfqewfqf}
we have
$\Z\Sp\cX_{\loc}\langle \disc\rangle \subseteq \PV_{K\bX_{\bC}^{\Z}}$. Hence  we know that the  coarse PV-square   \eqref{qwdqwerrwqwdqweddwdwdwd1eee} is  cartesian for discrete $X$. In the present section  we show that 
$\Z\Sp\cX_{\loc}\langle \disc\rangle$ contains the motives of many non-discrete $\Z$-bornological coarse spaces.   The main result   is the following theorem.
\begin{theorem}\label{werigosetrrgertgerwgw}Assume one of the following:
\begin{enumerate}
\item\label{ijtgoertgegertgwee} $X$ has weakly finite asymptotic dimension.
\item \label{ijtgoertgegertgwee1} $X$ has bounded geometry and the coarse Baum-Connes assembly maps $\mu_{K\cX^{\Z}_{ \bC}(-\otimes \Z_{min,min}),X}$ and
$\mu_{K\cX^{\Z}_{\bC}(-\otimes \Z_{can,min}),X}$  are   equivalences.
\end{enumerate}
 Then $\Yo^{s}_{\loc}(X)\in \Z\Sp\cX_{\loc}\langle \disc\rangle$ and   
  the coarse PV-square   \eqref{qwdqwerrwqwdqweddwdwdwd1eee} is  cartesian.\end{theorem}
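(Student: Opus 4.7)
The plan is to prove the inclusion $\Yo^{s}_{\loc}(X)\in\Z\Sp\cX_{\loc}\langle\disc\rangle$, after which the cartesianness of \eqref{qwdqwerrwqwdqweddwdwdwd1eee} follows immediately from Corollary \ref{qerigojoqrfewfqewfqf}. The strategy is to first use the coarse assembly maps to replace $\Yo^{s}_{\loc}(X)$ with a colimit of cone motives of Rips complexes, and then use the cone-boundary equivalences of \cite{equicoarse} to exhibit each such cone motive as a suspension of the motive of a discrete minimal-bornology $\Z$-bornological coarse space.

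First I would reduce both hypotheses to the common input that the assembly maps named in (\ref{ijtgoertgegertgwee1}) are equivalences. Under (\ref{ijtgoertgegertgwee}), by \cite[Thm.~10.4]{ass} the coarse assembly map $\mu_{F,X}$ is an equivalence for every strong coarse homology theory $F$; applied to the strong coarse homology theories $K\cX^{\Z}_{\bC}(-\otimes\Z_{min,min})$ and $K\cX^{\Z}_{\bC}(-\otimes\Z_{can,min})$ regarded as non-equivariant coarse homology theories in the free $\BC$-variable, this retrieves exactly the assembly-map part of (\ref{ijtgoertgegertgwee1}). By Remark \ref{wrgijowergwregrwef} only these two coarse homology theories need be inverted to detect local equivalences.

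Second, I would promote the assembly-map equivalences to a motivic statement. The coarse assembly maps $\mu_{F,X}$ are values of a single motivic natural transformation
\[
\phi_X:\Yo^{s}(\cO^{\infty}\bP(X))\longrightarrow \Yo^{s}(X),
\]
coming from the forget-control fibre sequence \cite[Def.~11.9]{equicoarse}. Since $\phi_X$ becomes an equivalence under both coarse homology theories defining the localization $\ell$, it is a local equivalence, and therefore
\[
\Yo^{s}_{\loc}(X)\;\simeq\;\colim_{U\in \mathcal{C}_X}\Yo^{s}_{\loc}(\cO^{\infty}(P_U(X)))
\]
in $\Z\Sp\cX_{\loc}$. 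Because $\Z\Sp\cX_{\loc}\langle\disc\rangle$ is localizing, it suffices to place each term of the colimit into it.

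Third, I would exploit the cone-boundary theory for each Rips complex $P_U(X)$. By \cite[Prop.~9.33]{equicoarse} the coarsification map $P_U(X)_{\disc}\to P_U(X)$ induces an equivalence $\Yo^{s}(\cO^{\infty}(P_U(X)_{\disc}))\simeq\Yo^{s}(\cO^{\infty}(P_U(X)))$, and by \cite[Prop.~9.35]{equicoarse} the cone-boundary yields
\[
\Yo^{s}(\cO^{\infty}(P_U(X)_{\disc}))\;\simeq\;\Sigma\Yo^{s}(\mathcal{F}(P_U(X)_{\disc})),
\]
where $\mathcal{F}$ is the forgetful functor $\Z\UBC\to \Z\BC$; the target is the motive of a bornological coarse space with the minimal coarse structure. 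In case (\ref{ijtgoertgegertgwee1}), bounded geometry of $X$ ensures $P_U(X)$ is locally finite, so $\mathcal{F}(P_U(X)_{\disc})$ has the minimal bornology and is therefore of the form $Y_{min,min}$ for the underlying $\Z$-set $Y$, placing the suspension into $\Z\Sp\cX_{\loc}\langle\disc\rangle$ by definition.

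The main obstacle is the parallel step in case (\ref{ijtgoertgegertgwee}): without bounded geometry, the bornology inherited by $P_U(X)_{\disc}$ can be strictly larger than the minimal one, and $\mathcal{F}(P_U(X)_{\disc})$ need not be of the form $Y_{min,min}$ directly. I would handle this by using the finite-dimensional decomposition supplied by weakly finite asymptotic dimension \cite[Def.~10.3]{ass} to decompose $\mathcal{F}(P_U(X)_{\disc})$ via an iterated Mayer-Vietoris into free unions of discrete pieces with the minimal bornology, each of which lies in $\Z\Sp\cX_{\loc}\langle\disc\rangle$ by strong additivity of the involved coarse homology theories. Verifying the compatibility of this combinatorial decomposition with the geometric-cone construction is the step that will require the most care.
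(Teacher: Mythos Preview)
Your overall strategy matches the paper's for case~(\ref{ijtgoertgegertgwee1}): reduce to showing $\Yo^{s}_{\loc}(X)\in\Z\Sp\cX_{\loc}\langle\disc\rangle$, use the assembly-map hypothesis to make the motivic coarse assembly map a local equivalence, and then analyze the cone motives $\Yo^{s}_{\loc}(\cO^{\infty}(P_{U}(X)))$ term by term. The divergence is in how you handle these cone motives and in how you treat case~(\ref{ijtgoertgegertgwee}).

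There is a genuine gap in your third step for case~(\ref{ijtgoertgegertgwee1}). Your claim that bounded geometry of $X$ forces $\cF(P_{U}(X)_{\disc})$ to have the minimal bornology is false: $P_{U}(X)$ is the \emph{geometric realization} of a simplicial complex, so each closed simplex is metrically bounded but uncountable, and the induced bornology is far from minimal even when the complex is locally finite. Hence $\cF(P_{U}(X)_{\disc})$ is not of the form $Y_{min,min}$, and your final step does not place the cone motive in $\Z\Sp\cX_{\loc}\langle\disc\rangle$. The paper avoids this by never applying the cone-boundary equivalence to the full Rips complex. Instead it first passes to a coarsely equivalent $X$ of \emph{strongly} bounded geometry so that each $P_{U}(X)$ is a finite-dimensional simplicial complex, and then runs a finite induction over the skeleta, using that $\Yo^{s}_{\loc}\circ\cO^{\infty}$ is homotopy invariant and excisive for closed decompositions. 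At the base of the induction one meets genuine discrete \emph{sets} $Y_{min,min,\disc}$, for which $\Yo^{s}_{\loc}(\cO^{\infty}(Y_{min,min,\disc}))\simeq\Sigma\Yo^{s}_{\loc}(Y_{min,min})$ does land in $\langle\disc\rangle$.

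For case~(\ref{ijtgoertgegertgwee}), your reduction to assembly-map equivalences via \cite[Thm.~10.4]{ass} is correct but leaves you without bounded geometry, so the Rips complexes need not be finite-dimensional and the obstacle you flag at the end is real and unresolved. The paper sidesteps this entirely: it invokes \cite[Thm.~5.59]{buen}, which already gives $\Yo^{s}(X)\in\Sp\cX\langle\disc\rangle$ in the motivic category \emph{before} localizing whenever $X$ has weakly finite asymptotic dimension; pushing forward along $\Res_{\Z}$ and $\ell$ then finishes immediately. Your proposed iterated Mayer-Vietoris decomposition is essentially an attempt to reprove that cited theorem inside the cone picture, and the ``compatibility with the geometric-cone construction'' you identify as delicate is precisely the content that \cite[Thm.~5.59]{buen} packages.
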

  \begin{proof}
  By     Corollary \ref{qerigojoqrfewfqewfqf} the  first part of the assertion  implies the second.
  
  Considering a bornological coarse space as a $\Z$-bornological coarse space we get a functor $\Res_{\Z}:\BC\to \Z\BC$.   A morphism in $ \Sp\cX$ is called a local equivalence
  if it is sent to an equivalence by the    functors  $K\cX^{\Z}_{ \bC}(-\otimes \Z_{min,min})$ and $K\cX_{ \bC}(-\otimes \Z_{min,min})$
   which are considered as non-equivariant homology theories.  
   As in the $\Z$-equivariant case we let $\ell:\Sp\cX\to \Sp\cX_{\loc}$ denote the localization at the local equivalences.
      We get a commutative diagram
   $$\xymatrix{\ar@/^-1cm/[dd]_{\Yo^{s}_{\loc}}\BC\ar[r]^{\Res_{\Z}}\ar[d]^{\Yo^{s}}&\ar@/^1cm/[dd]^{\Yo^{s}_{\loc}}\Z\BC\ar[d]^{\Yo^{s}}\\\Sp\cX\ar[r]^{\Res_{\Z}}\ar[d]^{\ell}&\Z\Sp\cX\ar[d]^{\ell}\\\Sp\cX_{\loc}\ar[r]^{\Res_{\Z}}&\Z\Sp\cX_{\loc}}$$

If $X $ in $\Sp\cX$ satisfies Assumption \ref{werigosetrgertgerwgw}.\ref{ijtgoertgegertgwee}, then $\Yo^{s}(X)\in \Sp\cX\langle \disc\rangle$ by  \cite[Thm 5.59]{buen}. This immediately implies that
$\Res_{\Z}(\Yo^{s}_{\loc}(X))\in  \Z\Sp\cX_{\loc}\langle \disc\rangle$.

We now assume that $X$ satisfies Assumption \ref{werigosetrgertgerwgw}.\ref{ijtgoertgegertgwee1}. Since the assertion of the  theorem only depends on the coarse equivalence class of $X$ we can assume that $X$
has strongly bounded geometry \cite[Def. 7.75]{buen}. Then for every coarse entourage $U$ of $X$ 
 the Rips complex $P_{U}(X)$ \cite[Ex. 2.6]{ass} is a finite-dimensional simplicial complex. The spherical path metric induces a bornological coarse structure  on the Rips complex such that  $X\to P_{U}(X)$ is an equivalence of bornological coarse spaces.
  Recall from \cite[Def. 9.7]{ass} that the universal  coarse assembly map is induced by the morphism \begin{equation}\label{adfqewfqewfe}
\mu_{\Yo^{s},X}:  \colim_{U\in \cC_{X}} \Yo^{s}(\cO^{\infty}(P_{U}(X)))\to  \Sigma \Yo^{s}(X)
\end{equation}
  derived from the cone sequence.  
  For any non-equivariant strong homology theory $F:\BC\to \bM$ the coarse assembly map $\mu_{F,X}$ from \eqref{fqwefwqedwqedqwedqewd} is then given by
  $\mu_{F,X}\simeq F( \mu_{\Yo^{s},X})$.

The Assumption \ref{werigosetrgertgerwgw}.\ref{ijtgoertgegertgwee1} together with Remark \ref{wrgijowergwregrwef}
  imply   that $ \mu_{\Yo^{s},X}$ is a local equivalence. Hence applying $\ell$ to  \eqref{adfqewfqewfe} we get the equivalence 
  \begin{equation}\label{adfqewfqewfe1}
\mu_{\Yo^{s}_{\loc},X}:  \colim_{U\in \cC_{X}} \Yo^{s}_{\loc}(\cO^{\infty}(P_{U}(X)))\stackrel{\simeq}{\to}  \Sigma \Yo^{s}_{\loc}(X)
\end{equation}
in $\Sp\cX_{\loc}$.
   The functor $ \Yo^{s}_{\loc}\circ \cO^{\infty}$ is homotopy invariant and excisive for closed decompositions of uniform bornological coarse spaces. Furthermore 
if $Y$ is a set, then $$\Yo^{s}_{\loc}( \cO^{\infty}(Y_{min,min,disc}))\simeq \Sigma \Yo^{s}_{\loc}(Y_{min,min})\in \Sp\cX_{\loc}\langle \disc\rangle\ .$$
 Using that $  \Sp\cX_{\loc}\langle \disc\rangle$ is a thick subcategory of $\Sp\cX_{\loc}$  and   $P_{U}(X)$ is finite-dimensional we
 can now use a finite induction by the skeleta of $X$
in order to conclude that  $$\Yo^{s}_{\loc}(\cO^{\infty}(P_{U}(X)))\in  \Sp\cX_{\loc}\langle \disc\rangle\ .$$ 
Since $  \Sp\cX_{\loc}\langle \disc\rangle$ even localizing  \eqref{adfqewfqewfe1} finally implies that  $$  \Yo^{s}_{\loc}(X)\in  \Sp\cX_{\loc}\langle \disc\rangle\ .$$
\end{proof}

\bibliographystyle{alpha}
\bibliography{forschung2021}

\end{document}